\author{Zacharie \textsc{Ales}$^{\dag,\ddag}$ \and Sourour \textsc{Elloumi}$^{\dag,\ddag}$}
\date{
    $^\dag$ UMA, ENSTA Paris, Institut Polytechnique de Paris, 91120 Palaiseau, France\\
    $^\ddag$ CEDRIC, CNAM, Paris, France\\[2ex]    
    \{firstname.lastname\}@ensta-paris.fr\\[2ex]
}
\definecolor{lighttext}{rgb}{0,0,0}
\newtheorem{theorem}{Theorem}[section]
\newtheorem{definition}{Definition}[section]
\newtheorem{corollary}{Corollary}[section]
\newtheorem{property}{Property}[section]
\newcommand{\nom}[1]{\overline{#1}}
\newcommand{\rea}{^{r}}
\newcommand{\pro}{^{p}}
\newcommand{\SetStations}{\mathcal V }
\title {A solution robustness approach applied to network optimization problems}
\newcommand{\NP}{\mathcal N\mathcal P}
\newcommand{\s}[1]{\{#1\}}
\renewcommand{\arraystretch}{1.4}
\definecolor{lightbg}{rgb}{1, 1, 1}
\newcommand{\globalcolor}[1]{%
  \color{#1}\global\let\default@color\current@color
}
\newcommand{\zcounter}[1]{\addtocounter{equation}{1}\newcounter{#1}\setcounter{#1}{\arabic{equation}}\zcount{#1}}
\newcommand{\zcount}[1]{(\arabic{#1})}
\begin{document}
\vspace{-4cm}
\maketitle

\hrule
\vspace{.1cm}
\begin{small}

\noindent\textbf{Abstract}
\vspace{.15cm}

\textit{Solution robustness} focuses on structural similarities between the nominal solution and the scenario solutions. Most other robust optimization approaches focus on the \textit{quality robustness} and only evaluate the relevance of their solutions through the objective function value.  However,  it can be more important to optimize the {solution robustness} and, once the uncertainty is revealed, find an alternative scenario solution $x^s$ which is as similar as possible to the nominal solution $x^{nom}$. This for example occurs when the robust solution is implemented on a regular basis or when the uncertainty is revealed late. We call this distance between $x^{nom}$ and $x^s$  the \textit{solution cost}.  We consider the \textit{proactive problem} which minimizes the average solution cost over a discrete set of scenarios while ensuring the optimality of the nominal objective of $x^{nom}$.

We show for two different solution distances $d_{val}$ and $d_{struct}$ that the proactive problem is NP-hard for both the integer min-cost flow problem with uncertain arc demands and for the integer max-flow problem with uncertain arc capacities. For these two problems, we prove  that  once the uncertainty is revealed, even identifying a \textit{reactive solution} $x^r$ with a minimal distance to a given solution $x^{nom}$ is NP-hard for $d_{struct}$, and that it is polynomial for $d_{val}$. 

{We highlight the benefits of solution robustness in a case study on a railroad planning problem. First, we compare our  proactive approach to the anchored and the $k$-distance approaches. Then, we show the efficiency of the proactive solution over reactive solutions. Finally, we illustrate the solution cost reduction when relaxing the optimality constraint on the nominal objective of the proactive solution $x^{nom}$. }
\vspace{.3cm}

\noindent\textit{Keywords: } Robust optimization; Uncertainty modelling; Solution robustness; NP-hardness; Proactive problem; Mixed-Integer Linear Programming.
\end{small}
\vspace{.3cm}

\hrule
\vspace{.3cm}

\section{Introduction}

In many industrial applications a \textit{nominal solution} ${x^{nom}}$ of an
optimisation problem $P$ is intended to be used on a regular basis. This is for
example the case in railway scheduling where a single planning can be repeated
each week.

Due to uncertainties on the data of $P$, ${x^{nom}}$ may occasionally become
infeasible and, thus, require to be adapted. {The \textit{quality robustness} of $x^{nom}$ characterizes how low the deterioration of the objective function of $P$ is when changes occur. }

Robust and stochastic methods tend to optimize the quality robustness {which may} not be the most relevant
approach. Indeed, since solution $x^{nom}$ is regularly implemented, people involved in its
realisation (e.g, employees, clients, ...) are accustomed to it. As a consequence, they may,
by habit, disrupt the realisation of the adapted solution $x^{s}$. For example, users may miss a
train if it leaves earlier than usual or an operator may activate a railroad
switch which is no longer required. These disruptions may lead to
incidents and customers dissatisfactions or even the need to adapt the solution once again. We call
 {\textit{solution costs}} {such indirect costs} that come from the adaptation of ${x^{nom}}$ into
$x^{s}$. {The lower the solution costs are, the higher the \textit{solution robustness} is~\cite{sevaux2004genetic,vandevonder2005use}.}

In this paper, we {introduce a solution robustness approach} which minimizes the solution cost  while limiting the
 nominal objective of the nominal solution. We modelize the
solution cost between two solutions by  a distance. The key idea is that the closer the solutions are to each other, the less likely
the human errors are.

In Section~\ref{sec:struct} we formally present {our solution robustness approach} and introduce the nominal, the reactive and the proactive problems. We discuss  links with existing approaches in Section~\ref{sec:literature}. {Sections~\ref{sec:mcf} and Section~\ref{sec:mf} are}
dedicated to  two network problems namely, the min-cost
flow problem and the max-flow problem within the framework of solution
robustness. We caracterize the complexity of the reactive and the proactive problems {for two different solution distances}. {Eventually, we present a case study on a railroad planning problem in Section~\ref{sec:experiments}, highlighting the benefits of the proactive approach over the reactive approach. We also compare ourselves to two other approaches from the litterature.}

\section{Reactive and proactive robustness solutions}
\label{sec:struct}

{We consider the deterministic version of an optimisation problem $\nom{P}$. We call this version the \textit{nominal problem}. Its data correspond to the normal situation:}

\begin{center}
$\nom{P}\left\{	
\begin{array}{lll}
\min & {f(x)}\\	
\mbox{s.t.} & {x\in X}
\end{array}
\right.$
\end{center}

{\noindent where $X\subset\mathbb R^n$ is the feasible solution set of $\overline P$ {and its objective function $f:\mathbb R^n\mapsto\mathbb R$ is called the \textit{nominal objective}.}} 

We modelize the uncertainty {on the data by a set of scenarios which may occur
$\mathcal S=\s{\xi_i}_{i=1}^{|\mathcal S|}$}. To each scenario $\xi_i$ 
is associated {a feasible set $X_i$} and a positive weight $w_i\in\mathbb R^+$ which represents its importance.
The weight of a scenario can for example be chosen according to its probability
of occurrence or its gravity.

{We now introduce two problems which represent two possible approaches to handle the uncertainty. In the \textit{reactive problem}, the uncertainty has not been anticipated and a feasible solution is obtained once the uncertainty is revealed. In the \textit{proactive problem}, a nominal solution and one solution for each scenario are computed a priori.}

\subsection{Reactive robust solutions}
\label{sec:reac}

We suppose that a solution ${x^{nom}}$ of the nominal problem $\nom P$ is known and
that scenario $\xi_i$ occurs at operating time (i.e., a few days or
hours before its realisation). The adaptation of ${x^{nom}}$ to scenario~$\xi_i$ may
not be a trivial problem. This is all the more true when many resources are
required to the concrete implementation of a solution. For example, in railway
scheduling, different services must communicate to synchronize their ressources
and trains may have to be transfered in advance between stations. Consequently,
the solution cost of the new solution may become more crucial than optimizing its nominal objective and ignoring it may even lead to infeasibilities. Indeed, there may
simply not be enough time to implement drastic modifications that would
minimize the nominal objective but would require the coordination of multiple
services and resources. Moreover, modifications of a recurrent solution at
operating time increases the risks of human errors. 

This motivates us to define a \textit{reactive problem} which provides a \textit{reactive solution} $x^r$ satisfying feasible set $X_i$ of $\xi_i$ and which distance to $x^{nom}$ is minimal:
%We modelize the
%problem of generating a reactive solution $x^r$ when scenario $\xi_i$ occurs by an
%optimization problem that we call the \textit{reactive problem}. In this problem, the distance
%between the nominal and the adapted solution is minimized while {satisfying feasible set $X_i$}:

\begin{center}
	${P\rea(\xi_i, {x^{nom}})}\left\{	
	\begin{array}{lll}
	\min & d(x\rea, {x^{nom}})\\	
	\mbox{s.t.} & {x\rea\in X_i}
	\end{array}
	\right.$
\end{center}

\noindent where $d:\mathbb R^n\times\mathbb R^n\mapsto\mathbb R$ is a distance function
between $x\rea$ and ${x^{nom}}$.

The reactive problem is considered once the uncertainty is revealed. We now
introduce the proactive problem which allows to anticipate the uncertainties.

\subsection{Proactive robust solutions}
\label{sec:proac}

A reactive solution $x^r$ may have a high nominal objective value $f(x^r)$ as $f$ is not taken into account in the reactive problem.
Furthermore, the solution cost may also be consequent as it has not been
anticipated. To address these issues, we introduce the \textit{proactive
problem}  $P\pro$ which variables are a \textit{proactive solution} $x\pro$ of the nominal
problem as well as an \textit{adapted solution} $x^i$ for each scenario $\xi_i\in\mathcal S$:

\begin{empheq}[left={P\pro(\mathcal S\hbox{,\,} c^*)}\empheqlbrace]{alignat=2}
	\min~ & \sum_{\xi_i\in\mathcal S} w_i\,d(x\pro, x^i) \label{eq:pro_objective}\\	
	\mbox{s.t.~} & x\pro\in X \label{eq:pro_A}\\
	& x^{i}\in X_i & \forall \xi_i\in\mathcal S\label{eq:pro_s}\\
	& f(x^p) = c^*\label{eq:pro_price}
\end{empheq}

\noindent where $c^*$ is the optimal value of the nominal problem.

Objective~\eqref{eq:pro_objective} minimizes the weighted sum of the solution
costs over all the scenarios. Constraints~\eqref{eq:pro_A} and~\eqref{eq:pro_s}
ensure that $x\pro$ is feasible for $(\nom P)$ and that $x^i$ is feasible for
scenario $\xi_i$. Eventually, Constraint~\eqref{eq:pro_price} ensures that the nominal objective of $x\pro$ is equal to $c^*$. Consequently, $x\pro$ is an optimal solution of the nominal problem which additionally minimizes the weighted sum of the solution costs
over $\mathcal S$.

Since the weights $w_i$ are positive in problem $P\pro(\mathcal S, c^*)$, each solution $x^i$ is an optimal solution of $P^r(\xi_i, x^p)$. As a consequence, if the nominal problem has a unique optimal solution $x^*$ then, solving $P^p(\mathcal S, c^*)$ is equivalent to solving $\overline P$ and $P^r(\xi_i, x^*)$ for $i\in\s{1, ..., |\mathcal S|}$.

Note that the proactive problem $P^p$ requires to know the optimal value $c^*$ of an optimal solution of
the nominal problem. This should generally not be a problem as $\nom{P}$ is
likely to be significantly easier to solve than $P^p$ given that $\nom{P}$
only finds one solution $\nom x$ while $P^p$ simultaneously finds $|\mathcal
S|+1$ solutions. 

{The proactive problem finds an optimal solution of the nominal problem. However,  the solution cost may be further improved by allowing a given flexibility on the nominal objective of the proactive solution $x\pro$. To allow such a flexibility, Constraint~\eqref{eq:pro_price} can be relaxed into the following constraint:}

	\begin{equation}
		f(x\pro) \leq c^* (1+\varepsilon),
		\label{eq:pro_price_eps}
	\end{equation}

 \noindent{where $\varepsilon\in \mathbb R^+$ represents the maximal allowed percentage of
 increase of the nominal objective value of $x\pro$. }

 % We show in Section~\ref{sec:experiments} that even setting $\varepsilon$ to $0$ can lead
 % to a significant reduction of the solution cost.

 {We now introduce two distances for the reactive and proactive problems.}

\subsection{Defining solution costs with two distances}

A
first intuitive distance corresponds {the $\ell_1$ norm} that we call the
\textit{distance in values}.

\begin{definition}
	\label{def:value}
	{\textit{Distance in values}} $d_{val}(x^1,x^2)\triangleq\displaystyle\sum_{j=1}^n |x^1_j - x^2_j|$.
\end{definition}
	
Depending on the context, $d_{val}$ may not be the most relevant distance. In
railway scheduling for example,  increasing by $1$ the number of
units on an existing train is much cheaper than creating a new
train. Indeed, the creation of a train requires to check the availability of a
locomotive and of agents as well as the compatibility of the train schedule
with that of other trains. Thus, we also introduce a \textit{distance in 
structure}. Let $\mathbb 1_{b}$ be equal to $1$ if condition $b$ is true and
$0$ otherwise.
	
\begin{definition} \label{def:structure} {\textit{Distance in structure}}
$d_{struct}(x^1, x^2)\triangleq\displaystyle\sum_{j=1}^n |\mathbb 1_{x^1_j > 0} - \mathbb 1_{x^2_j
> 0}|$. \end{definition}

{Note that some problems may contain several subsets of variables. In that case, it may be relevant to apply these distances to some subsets only. }
	
\subsection{Literature review on solution cost in robust optimization}
\label{sec:literature}

 %{In this context, both the quality cost and} the solution cost are necessarily equal to $0$ as the solution is not modified once the uncertainty is revealed. {The quality cost should not be confused with the \textit{price of robustness}~\cite{bertsimas2004price}, which corresponds to the minimal increase of the nominal objective required to ensure the feasibility of a single solution in all possible scenarios.  }
 
In the line of~\cite{soyster1973convex}, robust optimization approaches often search a nominal solution which is feasible for any value of the data in an uncertainty set~\cite{ben1998robust,bertsimas2004price}. In this context, the solution cost is equal to $0$ as the solution is not modified once the uncertainty is revealed and the \textit{price of robustness}~\cite{bertsimas2004price} corresponds to the minimal increase of the nominal objective required to ensure the feasibility of a single solution in all possible scenarios.

In many concrete problems a competitive nominal solution may not be feasible for all possible realisations of the uncertainty. To alleviate this problem, two-stage robust optimization approaches have been introduced where the value of a set of recourse variables is fixed only once the uncertainty is revealed~\cite{atamturk2007two,ben2004adjustable}. To ease the resolution of such problems, restrictions are often imposed on the recourse variables~\cite{ben2004adjustable}. In adjustable robust problems, the value of the recourse variables can even be directly deduced from the value of the uncertain data~\cite{ben2003new,gupta2001provisioning,ouorou2007model,poss2013affine}, thus significantly reducing the complexity of the problem. In two-stage robust optimization approaches, the nominal objective is generally optimized and the solution cost is indirectly considered by the choice of the recourse variables. The article~\cite{bendotti2019anchor} constitutes an exception as the authors consider a scheduling problem in which the nominal objective is bounded and where the weighted sum of anchored jobs is maximized. A job is said to be \textit{anchored} if it can not be rescheduled once the uncertainty is revealed. This objective can be viewed as a way to limit the solution cost. {The authors consider uncertain processing times and show that maximizing the weight of the anchored jobs is polynomial for the box uncertainty set. It is however NP-hard for both the budgeted and the polytope uncertainty sets.}

Liebchen et al. introduced a general two-stage framework, called \textit{recoverable robustness}, in which a solution $x$ and a recovery algorithm $A$ are determined such that the nominal objective is optimized and, once the uncertainty is revealed, it is guaranteed that the application of $A$ on $x$ leads to a feasible solution~\cite{liebchen2009concept}. Restrictions on $A$ can be imposed to limit its recovery actions, its complexity or even the distance between $x$ and the recovered solution. This last constraint could be used to bound the solution cost. However, it is generally not easy to ensure that these restrictions are satisfied for any realization of the uncertainty.  In the solution robustness framework that we define in Section~\ref{sec:proac}, the proactive problem is able to minimize the solution cost at the price of an additional set of variables for each scenario considered. 

{Two examples of recoverable robustness applied to shortest path problems are introduced in~\cite{busing2012recoverable}. In \textit{$k$-distance recoverable robustness}, the recovery actions are limited since at most $k$ new arcs can be used once the uncertainty is revealed. In the second approach, called \textit{rent recoverable robustness}, an edge is said to be \textit{rented} if it is used in the first stage and it is \textit{bought} if it is used in the second stage. For each scenario $s$ and each arc $e$  the cost $c^s_{e}$ incurred for both renting and buying arc $e$ is defined. The cost of the arc is lower if it is only rented  ($\alpha c^s_{e}$ with $\alpha\in]0, 1[$) and higher if it is only bought ($(1+\beta)c^s_{e}$ with $\beta\geq 0$). This second approach is more flexible than the first one since the number of new edges used in the second stage is not constrained but it is also less generic. Indeed, the notion of renting and buying edges is relevant in problems such as railway scheduling where the network is owned by a company and exploited by others but it is not suitable for all applications. The authors study the complexity of both variants on three uncertainty sets and show that only the rent recoverable robustness on the interval set is polynomial. In Section~\ref{sec:experiments} we compare the $k$-distance recoverable robustness and the anchored approach to our proactive approach.}

{Another type of recoverable robustness called \textit{recovery-to-optimality} has been introduced \cite{goerigk2014recovery}. In this context, when a scenario $s$ occurs, the nominal solution $x^{nom}$ must be adapted into a solution $x^s$, with an optimal nominal objective and which, as a second objective, minimizes a recovery cost $d(x^{nom}, x^s)$. The objective is then to find a nominal solution which minimizes the average or the worst recovery cost over all the scenarios. The recovery cost can correspond to a solution cost or a regret. In addition to imposing the optimality of $x^s$, this approach is different from ours as it does not necessarily impose the feasibility of the nominal solution $x^{nom}$. Consequently, recovery-to-optimality appears more relevant when the solution cost is less important than the  nominal objective value as $x^{nom}$ may require a significant solution cost in order to be transformed into a solution $x^s$  with an optimal nominal objective for scenario $s$. This is for example the case when the uncertainty is revealed early (e.g., months or years before implementation) or more generally when the changes are unlikely to lead to errors (e.g., few humans interventions). On the contrary, our approach is more appropriate when the solution cost is at least as important as  the nominal objective value. For example, this occurs when the uncertainty is revealed late (e.g., a few minutes or hours before implementation) or when the cost of any error is high (e.g., when lives are at stake). The authors apply their approach to a linear program for timetabling in public transportation. Recovery-to-optimality requires the optimality of $x^s$ for each scenario $s$ which is harder to ensure than the optimality of $x^{nom}$ required by our approach. As a consequence, they consider a heuristic in which optimal solutions of a subset of scenarios $\mathcal S$ is first computed and then a solution $x$ which minimizes the recovery cost to these solutions is obtained.}

The notion of $(a, b)$-supermodel introduced in~\cite{ginsberg1998supermodels}  is related to the solution cost.  A solution $x$ is said to be an \textit{$(a, b)$-supermodel} if whenever the value of at most $a$ variables in $x$ is changed, then a feasible solution of the original problem can still be obtained by modifying the value of at most $b$ other variables. The authors show that a $(1, 1)$-supermodel of SAT  can be obtained by solving a larger instance of SAT. The originality of this approach is the fact that the uncertainty is not on the data but on the value of the solution itself.

{The identification of efficient schedules both in terms of nominal objective and solution cost has been considered under different names: predictability~\cite{mehta1998predictable,o1999predictable}, stability~\cite{herroelen2004construction,leus2003generation,leus2004branch,leus2005complexity}, solution robustness~\cite{sevaux2004genetic,vandevonder2005use}. Exact approaches are rarely considered to solve such problems. One exception is~\cite{leus2004branch} in which  a hand-made branch-and-bound algorithm is considered to solve a single-machine scheduling problem in which a single job is anticipated to be disrupted. In other works, heuristic approaches are generally considered. For example,} in~\cite{sevaux2004genetic} the authors present a multi-objective  genetic algorithm in which both the solution cost and the nominal objective are optimized. Their problem share features with both the reactive problem $P^r$ (defined in Section~\ref{sec:reac}) and the proactive problems $P^p$ (defined in Section~\ref{sec:proac}). Indeed, as in $P^r$, a nominal solution $x^{nom}$ is fixed and we search a solution $x$ which distance to $x^{nom}$ is minimized, and similarly to $P^p$, the cost of $x$ for several scenarios is additionally minimized. 

The approach presented in this paper has already been applied to a scheduling problem of SNCF, the french national railway company~\cite{lucas2020planification,lucas2019reducing}. We below apply this framework to two network problems, namely: the integer min-cost flow and the integer max-flow. It is well known that the deterministic version of both of these problems can be solved with polynomial time
algorithms~\cite{ahuja1988network} {but we show that it is not the case for their proactive counterparts}. We consider uncertain arc demands for the min-cost flow problem and uncertain arc capacities for the max-flow problem.

% {If continuous flows are sought, their reactive and proactive
% counterparts lead to linear programs and are, thus, also polynomials. We prove in the next section that the same does not necessarily apply when the integrity
% on the flow is imposed. {Consequently, the integrity of all the flows is  assumed, in the remainder of this article.}} % $x$ in a graph $G(V, A)$ is always assumed in the remainder of this article (i.e., $x:A\mapsto \mathbb N$). 

\section{Solution robustness for the min-cost flow problem with  uncertain arc demands}
\label{sec:mcf}

The min-cost flow problem can be stated as: 
\vspace{.3cm}

\noindent\fbox{
	\parbox{0.97\textwidth}{\textsc{Min-Cost Flow Problem} \newline
		\begin{tabular}[\textwidth]{p{0.06\textwidth}p{0.85\textwidth}}

Input: & A digraph $G = (V, A)$ with arc demands $\ell_a\in\mathbb N$, capacities  $u_a\in\mathbb N$ and unitary cost $c_a\in\mathbb R^+$ of each arc $a\in A$ and node demands $b_v\in\mathbb Z$
for each vertex $v\in V$ ($b_v>0$ if $v$ is a \textit{supply node}, $b_v <0$ if $v$ is a \textit{demand node} and $b_v=0$ if $v$ is a \textit{transshipment node}).\\ 

Output: & Find an integer flow with minimal cost.
		\end{tabular}
	}
}
\vspace{.3cm}

We assume that the uncertainties are on the arcs
demands $\ell$. We denote by $\ell^\xi$ the arc demands
associated to a scenario $\xi\in \mathcal S$.

% In this context an arc $ij\in A$ represents  a train
% between two stations $i$ and $j$, $x_{ij}$ to the number of units contained in
% this train, $\ell_a$ to the minimal number of units required to satisfy the
% customer demands and $b_v$ to the number of units in $v$ which are
% available at the start of the horizon ($b_v>0$) or required at its end
% ($b_v<0$).

We   characterize the complexity of the four  reactive and proactive min-cost flow problems associated with distances $d_{val}$ and $d_{struct}$, as represented in
Table~\ref{tab:complexity}. Note that all these problems are in $\NP$.

\begin{table}[H]
    \centering
    \renewcommand{\arraystretch}{1.4}

	\begin{tabular}{ccc}
		\hline
			 \multirow{2}{*}{\textbf{Problem}}& \multicolumn{2}{c}{\textbf{Distance}}\\
			 & $d_{val}$ & $d_{struct}$ \\\hline
			 \multirow{2}{*}{\textbf{Proactive}} & \multirow{2}{*}{$\NP$-hard (Section~\ref{sec:mcf_dval})} & $\NP$-hard (Section~\ref{sec:mcf_dstruct})\\
			 & & \small{even with $1$ scenario}\\\hline
			 {\textbf{Reactive}} & {Polynomial (Section~\ref{sec:mcf_dval})} & $\NP$-hard (Section~\ref{sec:mcf_dstruct})\\\hline
		\end{tabular}
    \caption{{Complexity of the min-cost flow problems with uncertain arc demands associated with distances $d_{val}$ and $d_{struct}$.}}
    \label{tab:complexity}
\end{table}

\subsection{Complexity of the robust min-cost flow with distance $d_{val}$}
	\label{sec:mcf_dval}

We show in this section that for distance $d_{val}$ the proactive problem is $\NP$-hard and that the reactive problem is polynomial.

	\begin{theorem}
		$MCF_{d_{val}}^p$ is $\NP$-hard.
		\label{th:MCF_DV_P}
	  \end{theorem}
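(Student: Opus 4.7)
The plan is to reduce from \textsc{3-Sat} in polynomial time. Given a formula $\phi$ with variables $x_1,\ldots,x_n$ and clauses $C_1,\ldots,C_m$, I will construct a min-cost flow instance together with $m$ scenarios $\xi_1,\ldots,\xi_m$ of unit weight, one per clause, such that the optimal objective of the associated proactive problem equals $0$ if and only if $\phi$ is satisfiable. Since a nominal-optimal $x^p$ and the adapted flows $x^i$ can be guessed and checked in polynomial time, $MCF_{d_{val}}^p$ lies in $\NP$, so the reduction will yield the stated NP-hardness.

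The first ingredient is a variable gadget producing a nominal problem whose integer optima biject with truth assignments. I build a chain $s=u_0,u_1,\ldots,u_n=t$ with $b_s=1$, $b_t=-1$, and between each pair of consecutive nodes two parallel zero-cost unit-capacity arcs $T_i$ and $F_i$. One unit of nominal flow is routed from $s$ to $t$, every integer nominal-optimal flow encodes an assignment via $T_i$ used $\Leftrightarrow x_i=\text{true}$, and the nominal optimum is $c^{*}=0$. The second ingredient is a clause gadget: because scenarios can only increase arc demands, each clause $C_j$ must correspond to a sub-network already present in the nominal graph but only activated by the altered demands of $\xi_j$. For each clause I would add an auxiliary supply-demand pair with balances $+1$ and $-1$ joined both by a cheap bypass arc (used in the nominal) and by short paths traversing arcs sharing endpoints with the three literal arcs of $C_j$. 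The scenario $\xi_j$ then forces positive demand on a trigger arc downstream of the bypass, so that the adapted flow of $\xi_j$ must leave the bypass and route its auxiliary unit through one of the three clause paths.

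If the nominal flow already sends its variable unit through a satisfying literal arc, the adapted flow can route the auxiliary unit through the same arc at no $d_{val}$-cost; otherwise at least one variable must be flipped, contributing $2$ to $d_{val}$ (one unit removed, one added). Summing over scenarios with $w_j=1$ then yields a total proactive objective equal to $2(m-s(x^p))$, where $s(x^p)$ counts the clauses satisfied by the assignment encoded by $x^p$, so the proactive optimum is $0$ iff $\phi$ is satisfiable.

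The main obstacle will be the precise calibration of the clause gadget under the restriction that only arc demands vary between scenarios. I need simultaneously: (i) every scenario remains feasible for every nominal flow, so infeasibility never dominates; (ii) the only way to reach zero $d_{val}$-cost in $\xi_j$ is for $x^p$ to satisfy $C_j$; and (iii) no unintended shortcut lets a non-satisfying nominal adapt at cost less than $2$. I plan to achieve (i) by putting sufficiently many redundant paths inside the clause sub-network, (ii) by tightly capping the capacities on the trigger arc and its outgoing arcs so that only the three literal arcs can carry the auxiliary unit in the scenario, and (iii) by a finite case analysis of all adaptations compatible with the forced demands. Once the gadget is verified, the correctness of the reduction and membership in $\NP$ together give the claimed $\NP$-hardness of $MCF_{d_{val}}^p$.
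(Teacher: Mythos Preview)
Your high-level plan---reduce from \textsc{3-Sat}, one scenario per clause, let an optimal nominal flow encode a truth assignment---is the same as the paper's. The gap is in the gadget and, more fundamentally, in the claimed target value. You assert that when $C_j$ is satisfied by the assignment encoded in $x^p$, the adapted flow $x^j$ can be taken with $d_{val}(x^p,x^j)=0$. But $d_{val}(x^p,x^j)=0$ means $x^j=x^p$ arc by arc, so $x^p$ itself would have to be feasible for $\xi_j$. In your own description the auxiliary unit sits on the bypass arc in the nominal, whereas the trigger demand of $\xi_j$ forces that unit off the bypass and onto a clause path; hence the bypass arc alone already contributes at least $1$ to $d_{val}$, and the fresh arcs of the clause path contribute more, \emph{regardless} of whether $C_j$ is satisfied. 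So the proactive optimum cannot be $0$; at best the cost is a positive per-scenario constant plus a penalty for unsatisfied clauses, and the whole difficulty of the reduction is precisely to show that this constant does not depend on which literal path is chosen---which you have not done. There is also a capacity conflict hiding in the series chain: if a clause path literally traverses a unit-capacity literal arc $T_i$, then in scenario $\xi_j$ the variable unit and the auxiliary unit compete for that arc, so ``routing the auxiliary unit through the same arc'' is infeasible without displacing the variable unit (and incurring further $d_{val}$ cost).

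The paper avoids all of this with a \emph{parallel} rather than a series gadget. Node $s$ has supply $n$; there are $2n$ literal nodes and $n$ variable nodes in parallel; the nominal arc demands $\ell_{x_i,t}=1$ force one unit through each variable node, so an optimal nominal flow selects exactly one of $(s,l_i),(s,\bar l_i)$ per $i$. Scenario $\xi_p$ sets $\ell^{\xi_p}_{s,t}=n-1$ and $\ell^{\xi_p}_{C_p,t}=1$, which pins the scenario flow down to $n-1$ units on $(s,t)$ plus one unit on a path $(s,l,C_p,t)$ for some literal $l$ of $C_p$. A short arc-set tabulation then shows the per-scenario $d_{val}$ cost is exactly $4n$ if the chosen $(s,l)$ is already used by the nominal, and $4n+2$ otherwise; the decision threshold is $4mn$, not $0$. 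Your chain idea might be salvageable if you abandon the zero-cost target and instead prove a clean dichotomy of this kind, but as it stands the clause gadget is unspecified (you concede this is the ``main obstacle''), requirement~(iii) is not verified, and the central numerical claim is false.
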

	
	\begin{proof}
	
		{We prove this result with a reduction from problem $3$-SAT. This problem
	considers a boolean expression composed of $n$ variables $X=\s{x_1, ..., x_n}$
	and $m$ clauses $\s{C_1, ..., C_m}$. Let a \textit{literal} be either a boolean
	variable $x_i$ or its negation $\overline x_i$. Each clause of a $3$-SAT
	problem is a conjunction of three literals (e.g. $C_1=(x_1\vee\overline x_2\vee
	x_3)$). The aim of this problem is to determine whether there exists an
	assignment of the variables which satisfies all the clauses.}
	
	{We  now  present  how  to  construct  an instance  $I_{MCF}$  of  the optimization problem
	$MCF_{d_{val}}^p$  from an  instance  $I_{SAT}$  of the  feasibility problem $3$-SAT. Instance $I_{MCF}$ has $m$ scenarios. We prove that an optimal solution of $I_{MCF}$ leads to a solution cost of value $4mn$ if and only if $I_{SAT}$ is a yes-instance. 

\vspace{.3cm}
\noindent{\textbf{Construction of $I_{MCF}$}}

	As represented in Figure~\ref{fig:MCF_DV_P_G}, the set of nodes $V$ of the constructed graph $G=(V, A)$ is composed of:}
	\begin{itemize}
		\item {one source node  $s$ {with supply} $b_s=n$ and one sink node $t$ {with demand} $b_s=-n$;}
		\item for each boolean variable $x_i$ one \textit{variable node} $x_i$ and two \textit{literal nodes} $l_i$ and $\overline l_i$ {with no demand} $b_{x_i}=b_{l_i}=b_{\overline l_i}=0$;
		\item for each clause $C_p$ one \textit{clause node} $C_p$  {with no demand} $b_{C_p}=0$.
	\end{itemize}
	
	 \noindent {The arc set $A$ is composed of the following subsets:}
	\begin{itemize}
		\item {$A_{sl}$  contains one arc from $s$ to each literal node;}
		\item {$A_{lx}$  contains one arc from each literal node to its corresponding variable node;}
		\item {$A_{lC}$  contains one arc $(l, C_p)$ for each clause $C_p$ and each literal $l$ in this clause;}
		\item {$A_{xt}$  contains one arc from each variable node to $t$;}
		\item {$A_{Ct}$  contains one arc from each clause node to $t$;}
		\item $A_{st}$ contains one arc from $s$ to $t$.
	\end{itemize}
	tea
	
	{Note that the size of the constructed graph is polynomial in $n$ and $m$. It has $3n + m + 2$ nodes and $5n + 4m+1$ arcs.}
	
	{The capacity of any arc is $m$ and its cost is $0$. Consequently, $c^*$ is equal to $0$. The nominal arc demand of all the arcs is
	$0$ except for the $n$ arcs of $A_{xt}$ where it is equal to $1$.}

	{Instance $I_{MCF}$ contains $m$ scenarios, one scenario $\xi_p$ per clause $C_p$. For each scenario $\xi_p$ the weight $w_p$ is equal to $1$, the  arc demands are all equal to $0$ except $\ell_{s,t}^{\xi_p}$ which is equal to $n-1$ and $\ell_{C_p,t}^{\xi_p}$ which is equal to $1$ (see Figure~\ref{fig:MCF_DV_P_G}). }
	
\vspace{.3cm}
\noindent{\textbf{Solving $I_{SAT}$ from a solution of $I_{MCF}$}}
		
	 {Let $f$  be a nominal feasible flow of $I_{MCF}$. Since the graph does not contain any cycle, the $n$ units supplied by $s$ are necessarily sent on $n$  different paths  to satisfy the arc demands $\ell_{x_i,t}=1$ for all $i\in\s{1, ..., n}$ (see example in
	 Figure~\ref{fig:MCF_DV_P_N}).}
		
	\begin{figure}[H]
		\begin{subfigure}{0.45\textwidth}
			\centering
			\includegraphics{./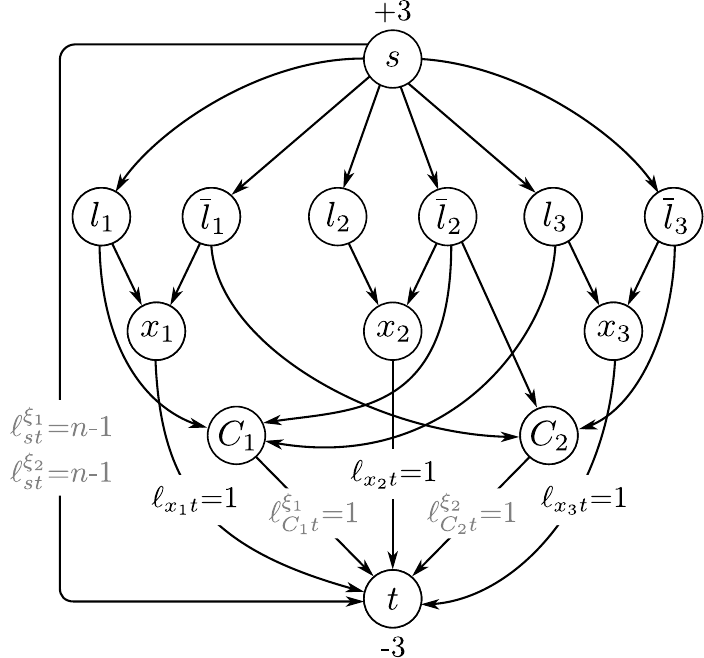}
			\caption{Graph and scenarios of $I_{MCF}$. Only the positive demands are represented.}
			\label{fig:MCF_DV_P_G} 
		\end{subfigure}
		\begin{subfigure}{0.45\textwidth}
			\centering
			\includegraphics{./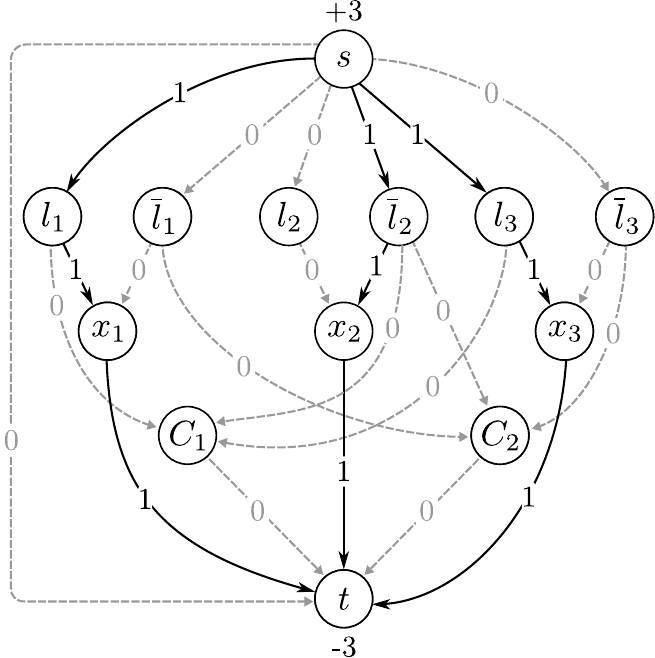}
			\caption{Nominal flow $f$.}
			\label{fig:MCF_DV_P_N} 
		\end{subfigure}
		\begin{subfigure}{0.45\textwidth}
			\centering
			\includegraphics{./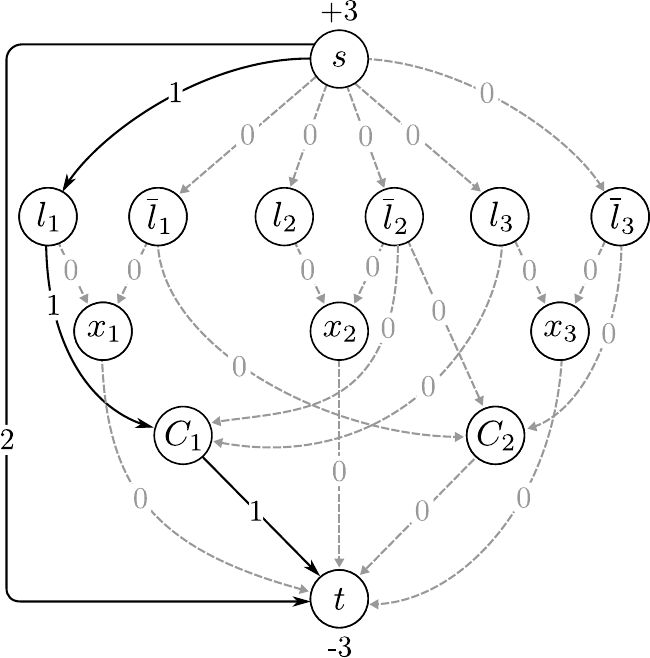}
			\caption{Flow of the first scenario $f^{\xi_1}$.}
			\label{fig:MCF_DV_P_S1} 
		\end{subfigure}\hfill
		\begin{subfigure}{0.45\textwidth}
			\centering
			\includegraphics{./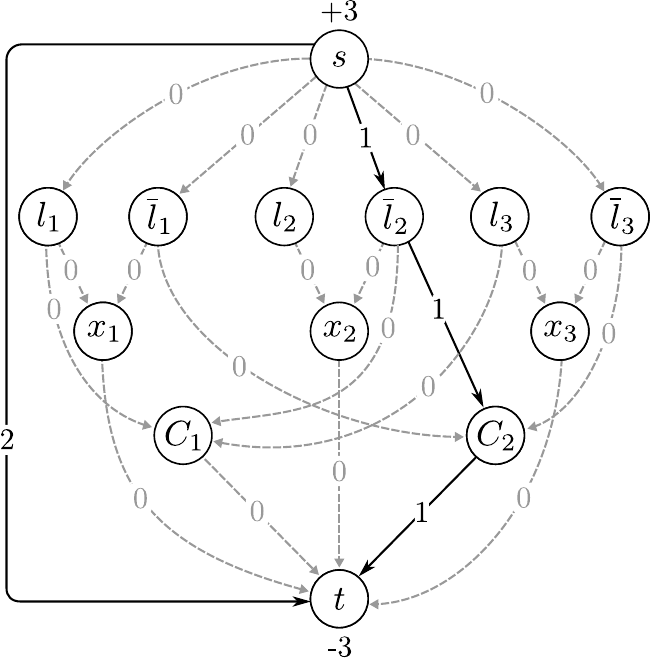}
			\caption{Flow of the second scenario $f^{\xi_2}$.}
			\label{fig:MCF_DV_P_S2} 
		\end{subfigure}
	
	  \caption{{Example of the reduction to $MCF^p_{d_{val}}$ of an instance of the $3$-SAT problem with
	  $3$ variables and $2$ clauses $C_1=(x_1\vee\overline x_2\vee x_3)$ and
	  $C_2=(\overline x_1\vee\overline x_2\vee\overline x_3)$. Figures~\ref{fig:MCF_DV_P_N},~\ref{fig:MCF_DV_P_S1} and~\ref{fig:MCF_DV_P_S2} represent an optimal solution $(f, f^{\xi_1}, f^{\xi_2})$ to $MCF^p_{d_{val}}$. The arcs with a flow of value $0$ are colored in gray.}}
	
	\label{fig:MCF_DV_P} 
	  
	\end{figure}

	{Let $f^{\xi_p}$ be the flow of scenario
	$\xi_p$. To satisfy the lower bounds $\ell_{s,t}^{\xi_p}=n-1$ and
	$\ell_{C_p,t}^{\xi_p}=1$, flow $f^{\xi_p}$ necessarily sends $n-1$ units of flow on
	 arc $(s,t)$ and $1$ unit of flow on a path $(s, l, C_p, t)$ with $l$ a
	literal included in clause $C_p$ (see Figures~\ref{fig:MCF_DV_P_S1} and~\ref{fig:MCF_DV_P_S2}). As represented in
	Table~\ref{tab:MCF_DV_P_costs}, each scenario leads to a
	solution cost of value $4n$ or $4n+2$. A cost of $4n$ is obtained if and only if the
	arc of $A_{sl}$ used in $f^{\xi_p}$ is also used in the nominal flow.}

	\begin{table}
		\centering\begin{tabular}{cccc}
			\hline		\textbf{Arc sets} & $\sum_a f_a$ & $\sum_a f^{\xi_p}_a$ & \textbf{Solution cost}
			\\\hline
	
			$A_{st}$ & $0$ & $n-1$ & $n-1$\\
			$A_{lx}$ & $n$ & $0$ & $n$\\
			$A_{xt}$ & $n$ & $0$ & $n$\\
			$A_{lC}$ & $0$ & $1$ & $1$\\
			$A_{Ct}$ & $0$ & $1$ & $1$\\
			$A_{sl}$ & $n$ & $1$ & $n-1$ or $n+1$\\\hline
			& & \textbf{Total} & $4n$ or $4n+2$\\\hline
			
		\end{tabular}
		\caption{{Solution cost incurred by scenario $\xi_p$ for each set of arcs. The solution cost associated with $A_{sl}$ is $n-1$ if the arc of $A_{sl}$ used in $f^{\xi_p}$ is also used in the nominal flow.}}
		\label{tab:MCF_DV_P_costs}
	\end{table}

	Consequently, the solution cost over all the scenarios is contained in $[4mn,
	4mn+2m]$. A cost of $4mn$ is obtained if and only if all the arcs of $A_{sl}$ used by the scenarios are also used in the nominal flow (i.e., if setting to true all the variables $x_i$  such that $f_{s,l_i} > 0$ and to false the others enables to satisfy all the clauses).
	
		\end{proof}

\begin{definition} (\cite{ahuja1988network}, Section  1.2)  A \textit{convex  cost flow  problem} is  a min-cost flow
problem where the cost of an arc is a piecewise linear convex function of its
flow.
\end{definition}

\begin{property}
$MCF_{d_{val}}^r$ is a convex cost flow problem.
\end{property}

\begin{proof}
The cost  of an arc  $a\in A$  is $|x^r_{a}-x_{a}|$ which  is a
convex piecewise linear function of  flow $x^r_{a}$.
\end{proof}

\begin{theorem}(\cite{ahuja1988network}, Section  14.3) A  convex cost
  flow  problem with  piecewise linear  convex cost  functions can  be
  transformed into a min-cost flow problem.
  \label{thm:ahuja_convex_cost_flow}
\end{theorem}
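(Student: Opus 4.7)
The plan is to turn the convex cost flow instance into an ordinary min-cost flow instance by splitting each arc with a piecewise linear convex cost into a bundle of parallel arcs, one per linear segment, each equipped with a purely linear cost.

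First I would fix notation. Consider an arc $a=(u,v)$ whose cost $c_a(x)$ is piecewise linear convex on $[0,u_a]$ with breakpoints $0=b_0<b_1<\cdots<b_k=u_a$ and constant slope $s_i$ on the segment $[b_{i-1},b_i]$. Convexity gives $s_1\le s_2\le\cdots\le s_k$ (strictly, if we merge collinear segments). I replace $a$ by $k$ parallel arcs $a_1,\dots,a_k$ from $u$ to $v$, where $a_i$ has capacity $b_i-b_{i-1}$ and linear unit cost $s_i$. Doing this for every arc yields a standard (linear) min-cost flow instance whose size is polynomial in the size of the original one plus the total number of breakpoints.

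Next I would argue the two instances are equivalent. In one direction, from any feasible flow $x$ in the original network, define $x'$ in the transformed network by saturating $a_1,a_2,\dots$ in order until a total of $x_a$ units has been placed on the bundle; this preserves feasibility and achieves cost $c_a(x_a)$ on the bundle. Conversely, for any optimal flow $x'$ in the transformed network, setting $x_a=\sum_i x'_{a_i}$ gives a feasible flow in the original network, and an exchange argument shows that the bundle is used greedily in order of increasing slope: if some unit were carried on $a_j$ while $a_i$ with $i<j$ had slack, rerouting that unit from $a_j$ to $a_i$ would strictly decrease the cost by $s_j-s_i>0$, contradicting optimality. Hence on an optimum the bundle cost equals exactly $c_a(x_a)$, so the two optimal values coincide and an optimum of the constructed min-cost flow instance yields an optimum of the convex cost flow instance.

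The only delicate step is the exchange argument, and it works precisely because the slopes are nondecreasing; without convexity one could not guarantee that an optimal flow fills the bundle cheapest-first, and the reduction would break.
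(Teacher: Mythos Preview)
Your argument is correct and is exactly the standard arc-splitting construction from Ahuja, Magnanti and Orlin, Section~14.3, which is precisely the reference the paper cites. Note, however, that the paper does not give its own proof of this theorem at all: it is stated purely as a citation to~\cite{ahuja1988network} and then invoked as a black box to deduce that $MCF_{d_{val}}^r$ is polynomial. So there is nothing in the paper to compare your proof against; you have simply supplied the textbook proof that the authors chose to omit.

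One small remark on the exchange step: you write that rerouting from $a_j$ to $a_i$ with $i<j$ ``strictly'' decreases the cost by $s_j-s_i>0$. Convexity alone only gives $s_j\ge s_i$; you covered yourself earlier by merging collinear segments, but it is worth saying explicitly that when $s_i=s_j$ the rerouting is cost-neutral, so one may always \emph{assume} an optimal flow fills the bundle in order of nondecreasing slope. Either way the conclusion stands.
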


\begin{corollary}
  $MCF_{d_{val}}^r$ is a polynomial problem.
\end{corollary}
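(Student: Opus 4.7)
The plan is to chain the two preceding results with the classical polynomial-time solvability of integer min-cost flow. By the Property, $MCF_{d_{val}}^r$ is a convex cost flow problem, and by Theorem~\ref{thm:ahuja_convex_cost_flow} (Ahuja et al.) any such problem with piecewise linear convex cost functions can be transformed into an equivalent min-cost flow problem. Since integer min-cost flow is polynomial, it suffices to check that this transformation blows up the instance size only polynomially.

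First I would make the reduction explicit: each arc $a\in A$ has cost $|x^r_a - x_a|$, which is a piecewise linear convex function with exactly two pieces (slope $-1$ on $[0, x_a]$ and slope $+1$ on $[x_a, u_a]$). Following the standard construction, I would replace every arc $a$ by two parallel arcs: one with capacity $x_a$ and cost $-1$ (or equivalently, after a constant shift, one segment of the convex function), and another with capacity $u_a - x_a$ and cost $+1$. After an additive shift of the objective by a constant, this yields an ordinary min-cost flow instance with at most $2|A|$ arcs and the same node set, whose optimal flow corresponds exactly to an optimal reactive solution.

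I would then invoke any polynomial algorithm for integer min-cost flow (for instance the successive shortest path or the enhanced capacity scaling algorithm described in \cite{ahuja1988network}) to conclude. The only subtlety worth mentioning is the negative arc cost coming from the first piece: this is routinely handled either by a preliminary shift of the cost function (adding a constant $x_a$ per arc) or by running an algorithm that supports negative costs on a graph without negative-cost cycles, which is automatically the case here since the transformed costs arise from a convex decomposition.

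The main obstacle is essentially nonexistent here: the entire argument reduces to verifying that the transformation of Theorem~\ref{thm:ahuja_convex_cost_flow} is carried out in polynomial time and space for our specific two-piece cost functions, and then quoting the well-known polynomial complexity of integer min-cost flow. Integrality of the reactive solution follows from the integrality of $x^{nom}$ and of all capacities, together with the total unimodularity of the min-cost flow constraint matrix.
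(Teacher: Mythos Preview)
Your proposal is correct and follows exactly the route the paper intends: the corollary is stated without proof, as an immediate consequence of the preceding Property (that $MCF_{d_{val}}^r$ is a convex cost flow problem) and Theorem~\ref{thm:ahuja_convex_cost_flow}. You simply spell out the two-piece decomposition and the polynomial blow-up bound that the paper leaves implicit; the only minor inaccuracy is that the feasible interval for $x^r_a$ is $[\ell^\xi_a,u_a]$ rather than $[0,u_a]$, which may shift or eliminate one of the two pieces but changes nothing in the argument.
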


\subsection{Complexity of the robust min-cost flow with distance $d_{struct}$}
\label{sec:mcf_dstruct}

We show that  for distance $d_{struct}$ both the proactive and the reactive problems are $\NP$-hard.

\begin{theorem}$MCF_{d_{struct}}^p$ is $\NP$-hard.\label{thm:MCFadaptD}\end{theorem}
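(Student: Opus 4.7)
The plan is to reduce from the decision version of \textsc{Minimum Set Cover} to the decision version of $MCF^p_{d_{struct}}$ with a single scenario, which matches the strengthened claim in Table~\ref{tab:complexity}. Given a ground set $U$, a family $\mathcal F\subseteq 2^U$ and an integer $K$, \textsc{Minimum Set Cover} asks whether $U$ can be covered by at most $K$ members of $\mathcal F$; this is a classical $\NP$-hard problem, and I will use it without proof.

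The graph I construct has a source $s$, a sink $t$, one set node $v_S$ for each $S\in\mathcal F$, and one element node $v_e$ for each $e\in U$; its arcs are $(s,v_S)$, $(v_S,v_e)$ whenever $e\in S$, $(v_e,t)$, together with a bypass arc $(s,t)$. All unit costs are $0$, so $c^*=0$ and Constraint~\eqref{eq:pro_price} is automatic. I set $b_s=|U|$, $b_t=-|U|$, capacities $1$ on every $(v_S,v_e)$ and every $(v_e,t)$, and $|U|$ on the set arcs and the bypass. The nominal lower bounds are $\ell_{s,t}=|U|$ and $0$ elsewhere. The single scenario $\xi$ is defined by $\ell^{\xi}_{s,t}=0$ and $\ell^{\xi}_{v_e,t}=1$ for every $e\in U$. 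Because $\ell_{s,t}$ equals the total supply $b_s$, flow conservation forces every feasible nominal flow to push $|U|$ units along the bypass and $0$ on every other arc; hence $x^p$ is unique and $\mathrm{supp}(x^p)=\{(s,t)\}$.

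On the scenario side, $b_s=|U|$ together with the $|U|$ forced units across the element-sink arcs leave the bypass empty, and each of these units is routed $s\to v_S\to v_e\to t$ for some $S\ni e$. The set nodes $v_S$ carrying positive flow in $x^{\xi}$ are exactly the members of some cover of $U$, of size equal to the number $k$ of set arcs $(s,v_S)$ used by $x^{\xi}$. Computing $d_{struct}(x^p,x^{\xi})$ arc by arc gives $1$ for the bypass (in $x^p$ only), $|U|$ for the element-sink arcs (in $x^{\xi}$ only), $|U|$ for the selected set-element arcs, and $k$ for the used set arcs. Hence an optimal proactive solution has objective $2|U|+1+k^*$ with $k^*$ the minimum cover size, and the set cover instance admits a cover of size at most $K$ iff the constructed $MCF^p_{d_{struct}}$ instance has a proactive solution of value at most $2|U|+1+K$.

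The step I expect to require the most care is the uniqueness argument for $x^p$: without it, the proactive problem could cheaply mimic $x^{\xi}$ by pre-routing part of $x^p$ through the set-element-sink layers, driving the distance close to zero and breaking the reduction. The forcing relation $\ell_{s,t}=b_s$ is precisely what channels the whole supply through the bypass, and the unit capacities in the middle and bottom layers prevent any integer over-covering effect that would decouple $k$ from the size of the cover induced by $x^{\xi}$. With these two technical points secured, $\NP$-hardness of \textsc{Minimum Set Cover} delivers the theorem.
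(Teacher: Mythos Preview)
Your reduction is correct. The forcing argument for $x^p$ is sound: since the nominal lower bound $\ell_{s,t}=|U|$ equals both the supply $b_s$ and the capacity $u_{s,t}$, flow conservation at $s$ leaves nothing for the set arcs, so the unique nominal feasible flow has support $\{(s,t)\}$. On the scenario side, the unit capacities on $(v_S,v_e)$ and $(v_e,t)$ together with integrality force exactly one set--element arc per element, so the $d_{struct}$ contribution of that layer is exactly $|U|$, and the number of active set arcs $(s,v_S)$ is precisely the size of the induced cover. Both directions of the equivalence with \textsc{Set Cover} go through as you state.

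The paper takes a different route: it reduces from \textsc{$3$-Partition} rather than \textsc{Set Cover}. There all node demands are zero (a circulation), all costs are strictly positive so the unique optimal nominal flow is the \emph{empty} flow, and the single scenario imposes lower bounds $\ell^{\xi}_{\alpha,V_i}=s(i)$; the capacity $B$ on each $(E_j,\alpha)$ arc then forces an exact packing of the $3m$ elements into $m$ triples of total size $B$. Your construction trades the numerical packing constraint for a purely combinatorial covering one, and forces the nominal flow via a tight arc demand rather than via positive costs. Both reductions use a single scenario, matching the strengthened claim in Table~\ref{tab:complexity}, and both yield strong $\NP$-hardness since neither introduces super-polynomial numbers. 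Your argument is arguably more elementary; the paper's has the virtue that the empty nominal flow makes the subsequent reactive corollary completely immediate.
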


\begin{proof}

	We prove the theorem with a reduction from problem $3$-Partition. This
	problem considers a set $E=\s{1, 2, ..., 3m}$ of $3m$ elements and a positive
	integer $B$. To each element $i\in E$ is associated a size $s(i)\in]\frac B 4,
	\frac B 2[$ such that $\sum_{i\in E} s(i)=mB$. The aim of this problem is to
	determine whether there exists a partition $\s{E_1, ..., E_m}$ of $E$ into $m$
	subsets such that the size of each subset $E_j$, $\sum_{i\in E_j}s(i)$, is
	equal to $B$. Note that since $s(i)\in]\frac B 4, \frac B 2[$, {the cumulative
	sizes of $2$ and $4$ objects are respectively lower and greater than $B$. As a consequence }each
	set necessarily contains exactly $3$ elements.

	We now present how to construct an instance $I_{MCF}$ of the optimization problem 
	$MCF_{d_{struct}}^p$ from an instance $I_{3P}$ of the feasibility problem $3$-Partition and we prove that a solution of $I_{MCF}$
	has a solution cost of $7m$ if and only if $I_{3P}$ is a yes-instance. 
	
\vspace{.3cm}
\noindent{\textbf{Construction of $I_{MCF}$}}

	{Figure~\ref{fig:redGraph} illustrates the graph obtained for an
 	instance of problem $3$-Partition  with $m=2$ and $B=100$.} 
	
	The set of nodes
	$V$ of the constructed graph $G=(V, A)$ is composed of one node $\alpha$, one
	node $V_i$ for each element $i\in E$ and one \textit{subset node}
	$E_j$ for each subset $j\in\s{1, ..., m}$. The demand of all the nodes is equal to $0$.
	
	\begin{figure}[H]
	
	\begin{subfigure}[t]{0.46\textwidth}
	  \centering
	  \includegraphics{./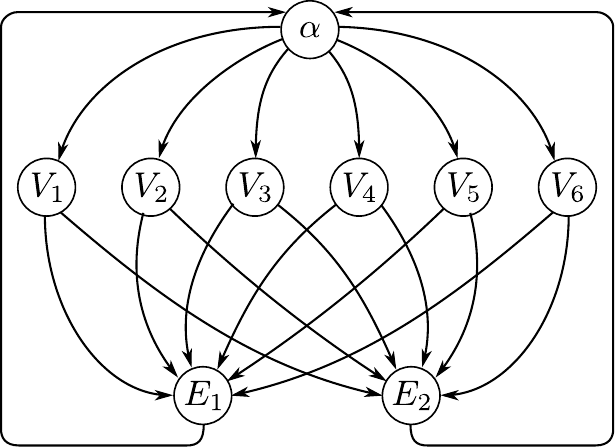}
	  \caption{{Graph of $I_{MCF}$.     All node demands are $0$. For any arc, demand is $0$, capacity is $B$ and unitary cost is $1$.}}
	\label{fig:redGraph} 
	\end{subfigure}\hspace{.5cm}
	\begin{subfigure}[t]{0.46\textwidth}
	  \centering
	  \includegraphics{./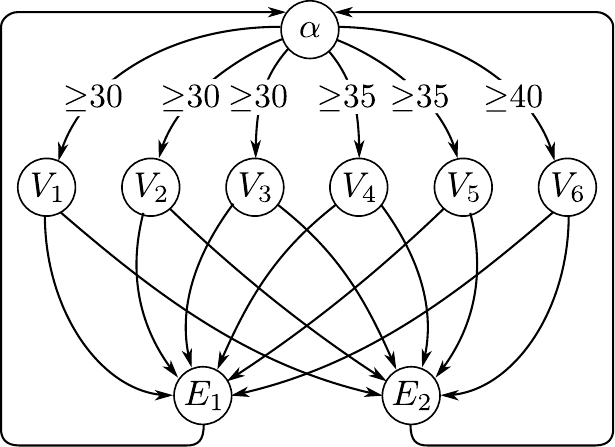}
	  \caption{{Positive demands of the scenario.}}
	\label{fig:redDemands} 
	\end{subfigure}
	\centering\begin{subfigure}[t]{0.46\textwidth}
	  \centering
	  \includegraphics{./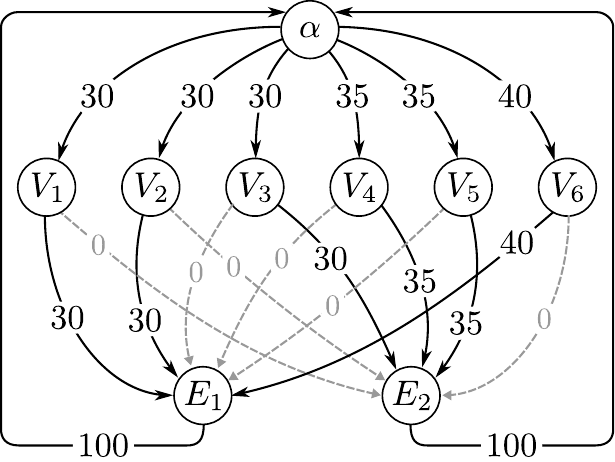}
	  \caption{{Optimal proactive flow inducing the $3$-partition $E_1=\s{1, 2, 6}$ and $E_2=\s{3, 4, 5}$. The arcs with a flow of value $0$ are colored in gray.}}
	\label{fig:redsolution} 
	\end{subfigure}
	\caption{{Example of the reduction to $MCF^p_{d_{struct}}$ of an instance of the $3$-Partition problem with   $m=2$, $B=100$ and elements of size $(30, 30, 30, 35, 35, 40)$.}}
	\label{fig:redDR}
	\end{figure}
	
	The arc set $A$ contains one arc $(\alpha, V_i)$ for each $i\in E$ and one arc
	$(V_i, E_j)$ for each $i\in E$ and each $j\in\s{1, ..., m}$. The graph also contains an additional set of arcs, denoted by $A_{E\alpha}$, which contains one arc from
	each subset node to $\alpha$. Note that the size of the graph is polynomial in $n$ and $m$ as it
	contains $4 m + 1$ nodes and $3m^2+4m$ arcs.

    For any arc, the nominal demand is $0$, the capacity is $B$ and the unitary cost is $1$. It follows, that the only optimal nominal solution is the empty flow, with the objective value $c^*=0$.
    
    One unique scenario $\xi$ is considered in which the only arc demands different from $0$ are  $\ell^\xi_{\alpha, V_i}$ of arcs $(\alpha, V_i)$ for all $i\in E$ which are set to $s(i)$ {(see Figure~\ref{fig:redDemands})}.

	 	 \vspace{.3cm}
\noindent{\textbf{Solving $I_{3P}$ from a solution of $I_{MCF}$}}

	 Since $c^*=0$, the only feasible proactive solution is the empty flow. %Consequently, the  solution cost induced  by the flow of a scenario corresponds to the number of arcs it uses.

	Let $f^\xi$ be a feasible solution for  scenario $\xi$. Observe that the outgoing flow from $\alpha$ is at least $Bm$. Moreover, arcs $(E_j, \alpha)$ have a capacity of $B$. Thus, the arcs $(E_j,\alpha)$ must all be saturated and the flow running through $\alpha$ is exactly $Bm$. Since the flow on each arc $(E_j, \alpha)$ is $0$ in the nominal flow and $B$ in $f^\xi$, these $m$ arcs induce a solution cost of $m$.
	
% 	Flow $f^\xi$ necessarily sends $B$ units of flow through each set node. Indeed, the  flow running through $\alpha$ is at least
% 	 $\sum_{i\in E}\ell^\xi_{\alpha, V_i}=Bm$ which corresponds to the sum of the capacities of
% 	 the        arcs in $A_{E\alpha}$. Thus, the arcs in $A_{E\alpha}$ must  all be saturated.% from $A_{E\alpha}$ must all be saturated as  which corresponds to .% which tail is        $\alpha$        (i.e.,
	%$\sum_{j=1}^mu_{E_j,\alpha}=Bm$). 

	We now show that at least an additional cost of $2|E|$ is incurred by the remaining arcs. Each $V_i$ has an ingoing flow of value $s(i)$. Consequently, $f^\xi$ must send a total flow of $s(i)$ on at least one of the $m$ following paths $\s{(\alpha, V_i, E_j)}_{j=1}^m$. To sum up, $f^\xi$ uses at least $2$ additional arcs for each element $i\in E$, leading to a solution cost of $m+2|E|$
	% there exists at least one
	% $j\in\s{1, ..., m}$ such that the reactive flow runs on the  two arcs $(\alpha,
	% V_i)$ and $(V_i, E)$. An additional cost of $m$ is due to the arcs $(E_j,
	% \alpha)\;\forall j\in \s{1, ..., m}$.} This is examplified in the reactive flow
	% represented in 
	(see Figure~\ref{fig:redsolution}).  The use of several paths to satisfy a demand $\ell^\xi_{\alpha V_i}$ would increase the solution cost as several
	arcs from $\s{(V_i, E_j)}_{j=1}^m$ would be used.

	%Let us assume that for each $i\in E$, only one cycle
	% $(\alpha, V_i, E_{c(i)}, \alpha)$ is used in $f^\xi$ to satisfy  the lower bound
	% $\ell^\xi_{\alpha, V_i}$. This solution leads to a solution cost of value  $7m$:
	% \begin{itemize}
	%     \item $2$  for each $i\in E$:  $(\alpha, V_i)$  and $(V_i,  E_{c(i)})$;
	%     \item $1$ for each set node $E_{c(i)}$: $(E_{c(i)}, \alpha)$.
	%     \end{itemize}
	 % Any additional cycle used in $f^\xi$ leads
	%to an increment of the distance % $d_{struct}$. Indeed, cycle $(\alpha, V_i,
	%E_k, \alpha)$ with $k\neq c(i)$ induces a cost of $1$ in arc $(V^i, E_k)$.

	Consequently, a solution cost of $7m$ is obtained if and only if for each $i\in E$ exactly one path is used to satisfy the demand $\ell_{\alpha, V_i}$   (i.e., if a partition with subsets of size $B$ is obtained by assigning each element $i$ to the unique set $E_j$ such that $f^\xi_{V_i, E_j}> 0$).
	
	\end{proof}

\begin{theorem}$MCF_{d_{struct}}^r$ is $\NP$-hard.
\end{theorem}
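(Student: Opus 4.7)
The plan is to reuse the reduction already constructed in the proof of Theorem~\ref{thm:MCFadaptD}. The key observation is that the proactive instance $I_{MCF}$ built there involves a single scenario, and moreover the nominal problem admits a unique optimal solution: since every arc has unitary cost $1$, nominal demand $0$ and the only supply/demand at nodes is $0$ everywhere, the unique optimal nominal flow is the empty flow $f\equiv 0$ with $c^* = 0$.

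Starting from an instance $I_{3P}$ of $3$-Partition, I will build an instance of the reactive problem $MCF_{d_{struct}}^r$ by using exactly the same graph $G=(V,A)$ and the same scenario $\xi$ as in the proof of Theorem~\ref{thm:MCFadaptD}, and by taking as input nominal solution $x^{nom}$ the empty flow. Since $x^{nom}_a = 0$ for every arc $a$, we have $d_{struct}(x^r, x^{nom}) = \sum_{a\in A}\mathbb 1_{x^r_a>0}$, i.e., the objective of the reactive problem is simply the number of arcs carrying positive flow in $x^r$. Thus the reactive problem on this input is identical to the inner optimization over $f^\xi$ analyzed in the proof of Theorem~\ref{thm:MCFadaptD}.

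Applying the argument from that proof verbatim then gives the desired equivalence: the $m$ arcs of $A_{E\alpha}$ must be saturated (contribution $m$), at least two additional arcs $(\alpha,V_i)$ and $(V_i,E_j)$ must be used for every $i\in E$ (contribution $2|E| = 6m$), and a reactive solution of structural distance exactly $7m$ exists if and only if $I_{3P}$ is a yes-instance, via the partition $E_j \mapsto \{i : x^r_{V_i, E_j} > 0\}$. Since the reduction is clearly polynomial in $m$ and $\mathcal{NP}$-membership is immediate, this proves that $MCF_{d_{struct}}^r$ is $\mathcal{NP}$-hard.

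There is essentially no new technical obstacle: the only thing to verify is that the empty flow is a legitimate input (i.e., an optimal nominal solution), which is immediate from the definition of $G$. The proof is therefore a one-paragraph appeal to the construction and the structural analysis already carried out in the previous theorem.
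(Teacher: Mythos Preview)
Your proposal is correct and follows essentially the same approach as the paper: reuse the single-scenario reduction from Theorem~\ref{thm:MCFadaptD}, observe that the unique optimal nominal flow is the empty flow, and feed it as the fixed input $x^{nom}$ so that the reactive problem coincides with the scenario subproblem already analysed. The paper's own proof is in fact even terser than yours, simply noting that the proactive flow is necessarily empty and can therefore be given as input without altering the reduction.
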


\begin{proof} 

{In the reduction considered in the proof of Theorem~\ref{thm:MCFadaptD}, the proactive flow is unique and known a priori as it is necessarily empty. Consequently, this flow can be given as an input of the problem rather than being part of its solution without altering the validity of the reduction.}

{This new reduction leads to an instance of $MCF_{d_{struct}}^r$ since the nominal flow is fixed and only $1$ scenario is considered.}
	\end{proof}

\section{Solution robustness for max-flow problem with uncertain arc capacities}
\label{sec:mf}

The max-flow problem can be stated as:

\vspace{.3cm}

\noindent\fbox{
	\parbox{0.97\textwidth}{\textsc{Max-flow Problem} \newline
		\begin{tabular}[\textwidth]{p{0.06\textwidth}p{0.85\textwidth}}

Input: & A digraph $G = (V, A)$ with a source $s\in V$, a sink $t\in V$ and capacities $u_a\in\mathbb N$ on the flow of each arc $a\in A$.\\ 

Output: & Find an integer flow with maximum value.
		\end{tabular}
	}
}
\vspace{.3cm}

We consider max-flow problems with uncertainties on the capacities $u$. Let
$\mathcal S$ be a set of scenarios and note $u^\xi$ the capacities associated
to a scenario $\xi\in \mathcal S$. {Note that the associated reactive and proactive problems are not particular cases of the ones considered in the previous section as the uncertainty is not on the arc demands.}

In the remaining of this section we determine the complexity of the
four reactive and proactive max-flow problems associated with distances $d_{val}$ and $d_{struct}$, as represented in
Table~\ref{tab:complexityMF}.

\begin{table}[H]
    \centering
    \renewcommand{\arraystretch}{1.4}

	\begin{tabular}{ccc}
		\hline
			 \multirow{2}{*}{\textbf{Problem}}& \multicolumn{2}{c}{\textbf{Distance}}\\
			 & $d_{val}$ & $d_{struct}$ \\\hline
			 \multirow{2}{*}{\textbf{Proactive}} & \multirow{2}{*}{$\NP$-hard (Section~\ref{sec:mf_dval})} & $\NP$-hard (Section~\ref{sec:mf_dstruct})\\
			 & & \small{even with $1$ scenario}\\\hline
			 {\textbf{Reactive}} & {Polynomial (Section~\ref{sec:mf_dval})} & $\NP$-hard (Section~\ref{sec:mf_dstruct})\\\hline
		\end{tabular}
    \caption{{Complexity of the max-flow  problems with uncertain arc capacities associated with distances $d_{val}$ and $d_{struct}$.}}
    \label{tab:complexityMF}
\end{table}

\subsection{Complexity of the robust max-flow with distance $d_{val}$}
\label{sec:mf_dval}

We show in this section that  for distance $d_{val}$ the proactive problem is $\NP$-hard and that the reactive problem is polynomial.

\begin{theorem}
	$MF_{d_{val}}^p$ is $\NP$-hard.

  \end{theorem}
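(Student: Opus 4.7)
My plan is to prove the NP-hardness by reduction from 3-SAT, paralleling the proof of Theorem~\ref{th:MCF_DV_P} for $MCF_{d_{val}}^p$. The key idea is to design a graph whose optimal nominal max-flows are in one-to-one correspondence with truth assignments of the 3-SAT variables, and whose scenario max-flows are forced to route part of the flow through clause-specific structures, so that minimising the total $d_{val}$ distance across all scenarios is equivalent to finding a satisfying assignment.

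Given a 3-SAT instance with $n$ variables and $m$ clauses, I would construct a layered graph with source $s$, sink $t$, a \textit{variable gadget} per variable $i$ (a hub node $u_i$, literal nodes $l_i,\bar l_i$, and unit-capacity arcs $(s,u_i),(u_i,l_i),(u_i,\bar l_i),(l_i,t),(\bar l_i,t)$) and a \textit{clause gadget} per clause $C_p$ (a node $C_p$ with unit-capacity incoming arcs from the three literal nodes of $C_p$ and one unit-capacity outgoing arc $(C_p,t)$). In the nominal capacity vector, the clause-related arcs would carry capacity~$0$, so that the nominal max-flow equals $n$ and every optimal nominal flow routes exactly one unit per variable through either $l_i$ or $\bar l_i$. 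For each clause $C_p$, scenario $\xi_p$ would then activate the capacities of the arcs $(l,C_p)$ for each literal $l$ of $C_p$ and of $(C_p,t)$, while simultaneously zeroing out enough direct literal-to-sink capacities to force any scenario max-flow to push at least one unit through the clause node.

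The correctness argument would then, in the spirit of Table~\ref{tab:MCF_DV_P_costs}, account arc-set by arc-set for the contribution of each scenario to $d_{val}(x^p,x^i)$, exhibiting a constant baseline cost per scenario plus an extra penalty that vanishes if and only if the literal arc activated by the scenario flow also carries one unit in the nominal flow, i.e.\ the assignment encoded by $x^p$ satisfies $C_p$. Summed over the $m$ scenarios, the total solution cost would reach its minimum value if and only if all clauses can be simultaneously satisfied, completing the reduction.

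The main obstacle will be calibrating the capacities so that (i) every integer nominal max-flow is forced to commit to exactly one of $l_i,\bar l_i$ per variable, (ii) every integer scenario max-flow is determined up to the sole choice of which literal of $C_p$ is activated, and (iii) the feasibility constraint $x^i\in X_i$ (which should implicitly require $x^i$ to be a max-flow under the scenario capacities, lest the problem become trivial) interacts with $f(x^p)=c^*$ as intended. Unlike the demand-driven reduction for $MCF_{d_{val}}^p$, where positive arc demands directly pin flow onto specific paths, the max-flow setting only offers capacity \emph{upper} bounds as a forcing mechanism, so the construction must carefully zero out competing arcs in each scenario to leave the clause gadget as the only routing alternative.
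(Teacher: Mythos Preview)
Your high-level plan (reduce from 3-SAT, encode a truth assignment in the nominal max-flow via one-per-variable literal choices, create one scenario per clause, and arrange that the per-scenario $d_{val}$ drops by a constant exactly when the literal used in that scenario is also used nominally) is exactly the paper's approach. Where you diverge, and where there is a genuine gap, is in the handling of your concern~(iii) and in the scenario calibration.

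First, your parenthetical in~(iii) is a misconception: in the paper's framework $X_i$ is simply the set of integer flows respecting the scenario capacities; it does \emph{not} require $x^i$ to be a max-flow. The paper's proof explicitly allows the empty flow as a scenario solution and computes its cost. The problem does not become trivial for the right construction, but it does become trivial for yours as sketched: if you merely activate the clause arcs and ``zero out enough direct literal-to-sink capacities'', then whenever the nominal flow happens not to use those particular arcs (i.e.\ precisely when the encoded assignment falsifies $C_p$), the nominal flow is still feasible under $u^{\xi_p}$ and the scenario cost is~$0$. This is the wrong direction for the reduction.

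The paper's resolution is the idea you are missing. It adds direct arcs $A_{sC}$ from $s$ to every clause node, so that the nominal max-flow is $c^*=n+m$ and the optimal nominal flow saturates \emph{all} clause arcs $(C_p,t)$ via $(s,C_p,t)$. Then, in scenario $\xi_p$, it zeroes out almost everything: all of $A_{xt}$, all of $A_{sC}$, and $(C_q,t)$ for every $q\neq p$, leaving only paths of the form $(s,l,C_p,t)$ with $l$ a literal of $C_p$. Any scenario flow is thus either empty (cost $3n+2m$) or a single unit on one such path (cost $3n+2m\pm 1$, with $-1$ iff the arc $(s,l)$ is also used nominally). The point is not to \emph{force} a scenario max-flow but to make the nominal flow certainly infeasible in every scenario, so that a large baseline cost is unavoidable and the only lever is the $\pm 1$ coming from reuse of an $A_{sl}$ arc. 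Your hub-node gadget $(s,u_i,l_i,\ldots)$ also makes this harder, since there is no direct $s$-to-literal arc to reuse; the paper's direct $A_{sl}$ arcs are what carry the satisfiability signal.
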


\begin{proof}
The proof of this theorem is similar to the one of Theorem~\ref{th:MCF_DV_P}. An instance $I_{MF}$ of the optimization problem $MF_{d_{val}}^p$ is constructed from an instance $I_{SAT}$ of the feasibility problem $3$-SAT and we prove that $I_{MF}$ has an optimal solution with  a solution cost equal to $m(3n+2m-1)$ if and only if $I_{SAT}$ is a yes-instance. 

\vspace{.3cm}
\noindent{\textbf{Construction of $I_{MF}$}}

As represented in Figure~\ref{fig:MF_DV_P}, the graph associated with $I_{MF}$ has the same nodes and arcs than the one in the proof of Theorem~\ref{th:MCF_DV_P} except for the arc $(s,t)$ which does not exist. The graph also includes an additional set of arcs $A_{sC}$ which contains one arc between $s$ and each clause node.

{
The  nominal capacities of all the arcs is {$+\infty$ except for the arcs in $A_{xt}$ and $A_{Ct}$ for which it is equal to $1$ and the arcs in $A_{lC}$ for which it is null.}} {As proved below, $c^*$ is equal to $n+m$.}
	
\begin{figure}
	\begin{subfigure}{0.45\textwidth}
		\centering
		\includegraphics{./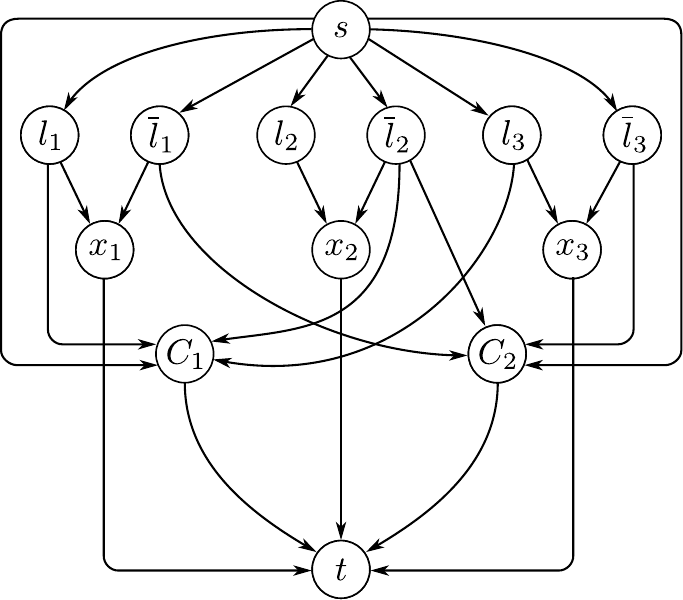}
		\caption{Graph of $I_{MF}$. }
		\label{fig:MF_DV_P_G} 
	\end{subfigure}
	\begin{subfigure}{0.45\textwidth}
		\centering
            \includegraphics{./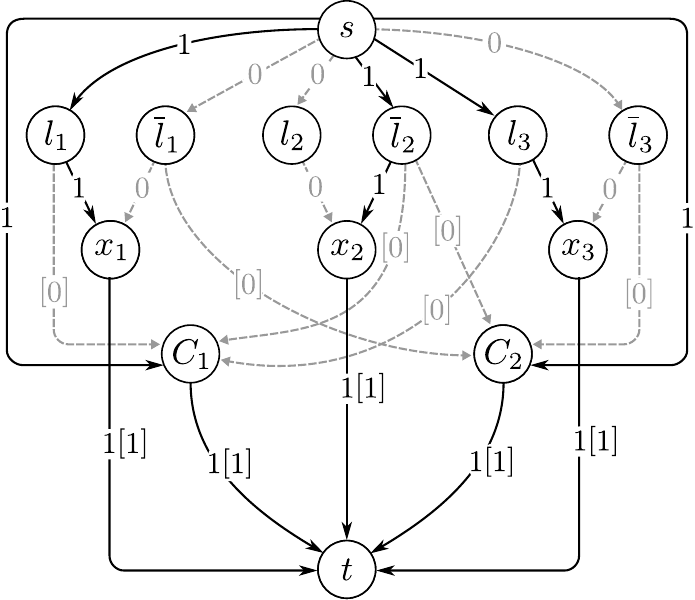}
		\caption{Nominal flow $f$. {The nominal capacities different from $+\infty$ are represented between brackets.}}
		\label{fig:MF_DV_P_N} 
	\end{subfigure}
	\begin{subfigure}{0.45\textwidth}
		\centering
		\includegraphics{./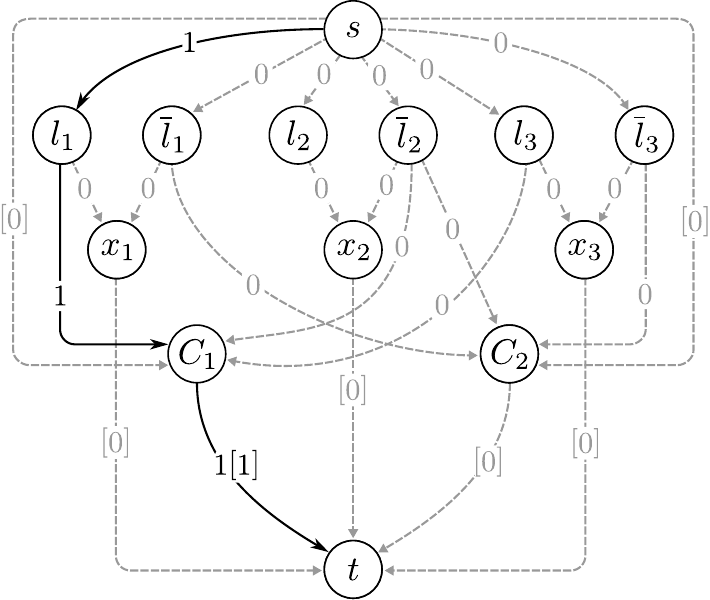}
		\caption{Flow $f^{\xi_1}$ of the first scenario $\xi_1$. {The scenario capacities different from $+\infty$ are represented between brackets.}}
		\label{fig:MF_DV_P_S1} 
	\end{subfigure}\hfill
	\begin{subfigure}{0.45\textwidth}
		\centering
		\includegraphics{./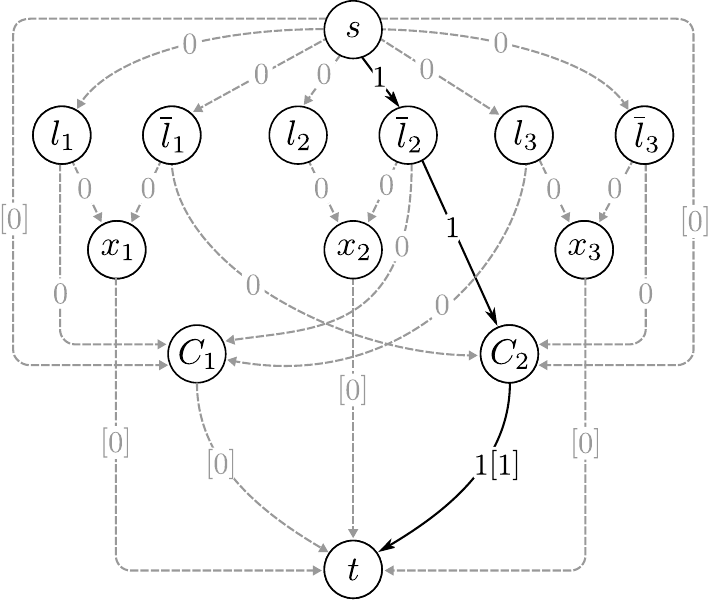}
		\caption{Flow $f^{\xi_2}$ of the second scenario $\xi_2$. {The scenario capacities different from $+\infty$ are represented between brackets.}}
		\label{fig:MF_DV_P_S2} 
	\end{subfigure}

  \caption{{Example of the reduction to $MF^p_{d_{val}}$ of an instance of the $3$-SAT problem with
  $3$ variables and $2$ clauses $C_1=(x_1\vee\overline x_2\vee x_3)$ and
  $C_2=(\overline x_1\vee\overline x_2\vee\overline x_3)$ to $MF^p_{d_{val}}$. Figures~\ref{fig:MF_DV_P_N},~\ref{fig:MF_DV_P_S1} and~\ref{fig:MF_DV_P_S2} represent an optimal solution $(f, f^{\xi_1}, f^{\xi_2})$. The arcs with a flow of value $0$ are colored in gray.}}

\label{fig:MF_DV_P} 
  
\end{figure}
	
	{Instance $I_{MF}$ contains $m$ scenarios $\s{\xi_1, ..., \xi_m}$ with weights equal to $1$. The capacities of the arcs of a scenario $\xi_p$ are all equal to {$+\infty$ except  the arcs in $A_{xt}$, $A_{sC}$ and $(C_q, t)$ for all $q\neq p$ for which it is equal to $0$ and $u_{C_p, t}$ which is equal to $1$}  (see Figure~\ref{fig:MF_DV_P_G}). }

\vspace{.3cm}
\noindent{\textbf{Solving $I_{SAT}$ from a solution of $I_{MF}$}}

Let $f$  be a nominal feasible flow of $I_{MF}$. Since the $n+m$ arcs arriving in $t$ all have a nominal capacity of $1$, the  nominal flow value is at most $n+m=c^*$.	

 Flow $f$ necessarily sends one unit of flow for each clause $C_p$ on path $(s, C_p, t)$ and one unit of flow for each variable $x_i$ on one of the following two paths $(s, l_i, x_i, t)$ or $(s, \overline l_i, x_i, t)$ (see example in
 Figure~\ref{fig:MF_DV_P_N}).

{Let $f^{\xi_p}$ be the flow of scenario
$\xi_p$. Due to the scenario capacities, $f^{\xi_p}$ can either be an empty flow (solution $f^{\xi_p,0}$) or send $1$ unit of flow from $s$ to $t$ through a path $(s, l, C_p, t)$ with $l$ a literal of clause $C_p$ (solution $f^{\xi_p,l}$) (see Figures~\ref{fig:MF_DV_P_S1} and~\ref{fig:MF_DV_P_S2}). As represented in
Table~\ref{tab:MF_DV_P_costs}, solution $f^{\xi_p,0}$ leads to a
solution cost of value $3n+2m$ and $f^{\xi_p,l}$ to a solution cost in $[3n+2m-1,3n+2m+1]$. A cost of $3n+2m-1$ is obtained if and only if the
arc of $A_{sl}$ used in $f^{\xi_p,l}$ is also used in the nominal flow.}

\begin{table}
	\centering\begin{tabular}{cccccc}
		\hline		\multirow{2}{*}{\textbf{Arc sets}} & \multirow{2}{*}{$\sum_a f_a$} & \multirow{2}{*}{$\sum_a f^{\xi_p,0}_a$} & \multirow{2}{*}{$\sum_a f^{\xi_p,l}_a$} & \multicolumn{2}{c}{\textbf{Solution cost}}\\    

		& & & & $\sum_a |f^{\xi_p,0}_a-f_a|$ & $\sum_a |f^{\xi_p,l}_a - f_a|$
		\\\hline

		$A_{sC}$ & $m$ & $0$ & $0$ & $m$ & $m$\\
		$A_{lC}$ & $0$ & $0$ & $1$ & $0$ & $1$\\
		$A_{Ct}$ & $m$ & $0$ & $1$ & $m$ & $m-1$\\
		$A_{sl}$ & $n$ & $0$ & $1$ & $n$ & $n\pm 1$ \\
		$A_{lx}$ & $n$ & $0$ & $0$ & $n$ & $n$\\
		$A_{xt}$ & $n$ & $0$ & $0$ & $n$ & $n$\\\hline
		& & & {\textbf{Total}} & {$3n+2m$} & $3n+2m\pm 1$\\\hline
		
	\end{tabular}
	\caption{{Solution cost incurred by scenario $\xi_p$ for each set of arcs.  The solution cost is reduced  for $A_{sl}$ if the arc used in   $f^{\xi_p,l}$ is also used in the nominal flow.}}
	\label{tab:MF_DV_P_costs}
\end{table}

Consequently, the solution cost over all the scenarios is between $m (3n+2m-1)$ and $m (3n+2m+1)$. An optimal cost of $m(3n+2m-1)$ is obtained if and only if all the arcs of $A_{sl}$ used by the scenarios are also used in the nominal flow (i.e., if setting to true all the variables $x_i$  such that $f_{s,l_i} > 0$ and to false the others enables to satisfy all the clauses).

	\end{proof}

	We show that $MF_{d_{val}}^r$ corresponds to a \textit{convex cost flow problem} which is known to be polynomial.
	
	\begin{property}
	$MF_{d_{val}}^r$ is a polynomial problem.
	\end{property}
	
	\begin{proof}
		$MF_{d_{val}}^r$ is a convex cost flow problem since the cost  of an arc  $a\in A$  is $|x^r_{a}-x_{a}|$. {Since a max-flow problem is a particular min-cost flow problem, according to Theorem~\ref{thm:ahuja_convex_cost_flow}, $MF_{d_{val}}^r$ is a polynomial problem.}
	\end{proof}

\subsection{Complexity of the robust max-flow with distance $d_{struct}$}
\label{sec:mf_dstruct}

We show that   for distance $d_{struct}$ both the proactive and the reactive problems are $\NP$-hard.

\begin{theorem}$MF_{d_{struct}}^p$ is $\NP$-hard.
\label{thm:MFadaptD}\end{theorem}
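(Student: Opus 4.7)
I would reduce from $3$-Partition, closely mirroring the reduction in the proof of Theorem~\ref{thm:MCFadaptD}, but with the roles of nominal and scenario flows essentially swapped: in $MCF^p_{d_{struct}}$ the nominal solution was trivially empty and the scenario was forced to carry flow; in $MF^p_{d_{struct}}$ I will arrange matters so that the scenario solution is forced to be empty and the nominal solution must realize a structured max-flow of large value. Given a $3$-Partition instance with $3m$ items of sizes $s(i)$ and bin capacity $B$, build the graph $G=(V,A)$ with $V=\{s,t\}\cup\{V_i\}_{i\in E}\cup\{E_j\}_{j=1}^m$ and arcs $(s,V_i)$, $(V_i,E_j)$ for each pair, and $(E_j,t)$. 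The nominal capacities are $u_{s,V_i}=s(i)$, $u_{V_i,E_j}=B$, $u_{E_j,t}=B$. A single scenario $\xi$ is introduced in which $u^{\xi}_{s,V_i}=0$ for every $i$ and all other capacities are unchanged.

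\textbf{Verification steps.} First, the $\{s\}$-cut has capacity $\sum_i s(i)=mB$ and the $\{t\}$-cut also has capacity $mB$, so the nominal max-flow value is $c^*=mB$; consequently, the constraint $f(x^p)=c^*$ forces every arc $(s,V_i)$ to carry $s(i)>0$ and every arc $(E_j,t)$ to carry $B>0$ in any proactive flow $x^p$. Second, in scenario $\xi$ all arcs leaving $s$ have zero capacity, so by flow conservation the unique feasible scenario flow $x^{\xi}$ is the empty flow. Third, since $x^{\xi}\equiv 0$, one has
\[
d_{struct}(x^p,x^{\xi})=|\{a\in A:x^p_a>0\}|=3m+m+\bigl|\{(V_i,E_j):x^p_{V_i,E_j}>0\}\bigr|.
\]
Fourth, each $V_i$ must ship its $s(i)$ units on at least one $(V_i,E_j)$ arc, so the last term is at least $3m$; equality holds iff each $V_i$ uses a single $(V_i,E_j)$ arc. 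Combined with the saturation $\sum_{i\mapsto j}s(i)=B$ for every $j$ (forced by $u_{E_j,t}=B$), this yields a valid $3$-partition. Hence $\min d_{struct}=7m$ iff the $3$-Partition instance is a yes-instance, which yields the $\NP$-hardness since the construction is polynomial and $MF_{d_{struct}}^p\in\NP$.

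\textbf{Main obstacle.} The delicate point, as in every max-flow based reduction, is ruling out ``degenerate'' scenario solutions that could short-circuit the reduction by choosing $x^{\xi}$ to artificially match $x^p$. Setting $u^{\xi}_{s,V_i}=0$ collapses the scenario feasible set to the single empty flow, which removes this freedom cleanly. A secondary worry is that $c^*=mB$ should be achievable independently of whether a $3$-Partition exists (otherwise the reduction would be trivial on no-instances); this is immediate because integer max-flow always attains the min-cut bound with integer capacities, and on no-instances the nominal simply pays with extra split arcs so that $|A\cap\mathrm{supp}(x^p)|>7m$. No other subtlety arises, and the remainder of the argument is a transcription of the counting used in the proof of Theorem~\ref{thm:MCFadaptD}.
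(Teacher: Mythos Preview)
Your reduction is correct. By forcing the scenario capacities on all $(s,V_i)$ arcs to zero you collapse the scenario feasible set to the single empty flow, and by setting $u_{s,V_i}=s(i)$ and $u_{E_j,t}=B$ you force any proactive max-flow of value $mB$ to saturate both cuts, so the only freedom left is precisely the assignment of items to bins via the $(V_i,E_j)$ arcs. The counting argument then goes through exactly as in Theorem~\ref{thm:MCFadaptD}, and the threshold $7m$ is tight iff a $3$-partition exists.

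Your route, however, is genuinely different from the paper's. The paper reduces from $3$-SAT, building a larger gadget graph with literal, variable, and clause nodes ($5n+2m+2$ vertices, $8n+6m$ arcs); there the nominal max-flow is \emph{unique} and the combinatorial choice is pushed into the scenario flow, which may or may not reuse the arcs of $A_{s,l}$ already active in the nominal flow. You do the opposite: your scenario flow is unique (zero) and the combinatorial choice lives in the nominal max-flow. This is a cleaner and more direct transcription of the $MCF^p_{d_{struct}}$ argument, and it also uses a single scenario, matching the ``even with $1$ scenario'' claim in Table~\ref{tab:complexityMF}. The paper's $3$-SAT construction is heavier but has the mild advantage that all capacities are bounded by small constants (at most $m{+}1$), whereas your capacities involve the numerical sizes $s(i)$ and $B$; this is harmless here since $3$-Partition is strongly $\NP$-hard, but it is worth noting.
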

\begin{proof}

	Similarly to the proof of $MCF_{d_{val}}^p$, we use a reduction
	from problem $3$-SAT. 
	
 Let $I_{SAT}$ be an instance of the feasibility problem $3$-SAT with $m$ clauses and $n$
 variables. As represented in Figure~\ref{fig:MF_DS_P}, we create an instance
 $I_{MF}$ of the optimization problem $MF_{d_{struct}}^p$ with one unique scenario $\xi$. We prove that an
 optimal solution of $I_{MF}$ leads to a solution cost of $3n + 2m$ if and only
 if $I_{SAT}$ is a yes-instance.
	
\vspace{.3cm}
\noindent{\textbf{Construction of $I_{MF}$}}
	
	In $I_{MF}$, the set of nodes $V$  of graph
	$G=(V, A)$ is composed of: 
	
	\begin{itemize} 
		
		\item $2$ nodes $s$ and $t$; 
		
		\item
	{$2n$ \textit{literal nodes} $\s{l_i}_{i=1}^{n}$ and $\s{\overline
	l_i}_{i=1}^{n}$; }
	
	\item {$3n$
	\textit{variable nodes} $\s{x^1_i, x^2_i,x^3_i}_{i=1}^n$; }
	
	\item {$2m$
	\textit{clause nodes} $\s{C^1_p,C^2_p}_{p=1}^{m}$.}
	
	\end{itemize}
	
	\noindent {The arc set $A$ is composed of the following subsets:}
	\begin{itemize}
		
  \item {$A_{s,l}$ which contains one arc from $s$ to each literal node;}

  \item {$A_{s,x}$ which contains one arc from $s$ to each first variable node $x^1_i$;}
  
  \item {$A_{l,x}$ which contains one arc from each literal node to its corresponding first variable node
  $x^1$;}

  \item {$A_{x}$ which contains arcs $(x^1_i, x_i^2)$ and $(x^2_i, x^3_i)$ for each $i$;}

  \item {$A_{x, t}$ which contains one arc from each last variable node $x^3_i$ to $t$;}

  \item {$A_{s, C}$ which contains one arc from $s$ to each first clause node
  $C^1_p$;}

  \item {$A_{l,C}$ which contains one arc $(l, C^1_p)$ for each clause $C_p$
  and each literal $l$ in $C_p$. Hence, $3$ such arcs exist for each clause;}

  \item {$A_C$ which contains one arc  $(C^1_p, C^{2}_p)$ for each clause $C_p$;}

		\item {$A_{C, t}$ which contains one arc from each  clause node $C^{2}_p$ to $t$.}
		
	\end{itemize}
	
 {Note that the size of the constructed graph is polynomial in $n$ and $m$. It has $5n+2m+2$ nodes and $8n+6m$ arcs.}

 {As proved below, the value of $c^*$ is equal  to $n+m$. The
 nominal capacities $u$ and the scenario capacities $u^\xi$ for each  arc subset are presented in
 Table~\ref{tab:MF_DS_P}.}

	\begin{figure}
		\begin{subfigure}{0.45\textwidth}
			\centering
			\includegraphics{./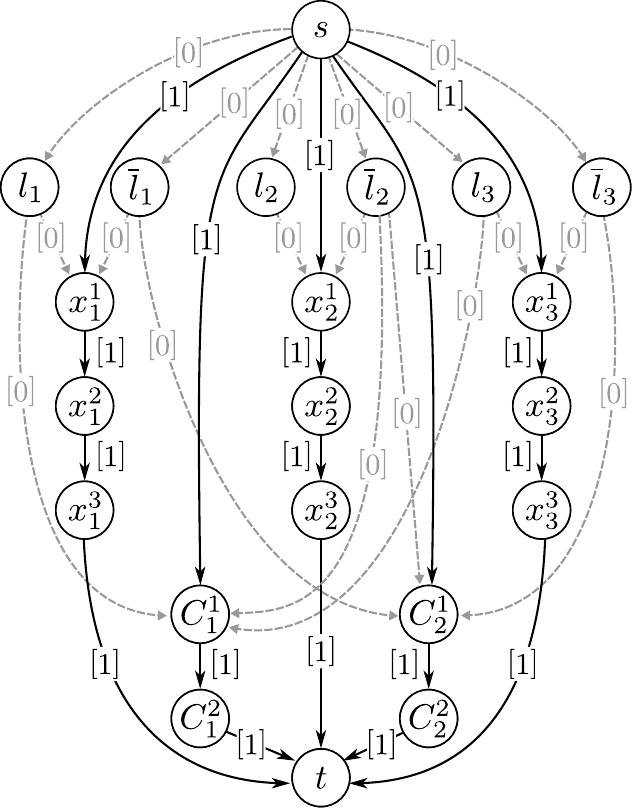}
			\caption{Nominal capacities.} 
		\end{subfigure}
		\begin{subfigure}{0.45\textwidth}
			\centering
			\includegraphics{./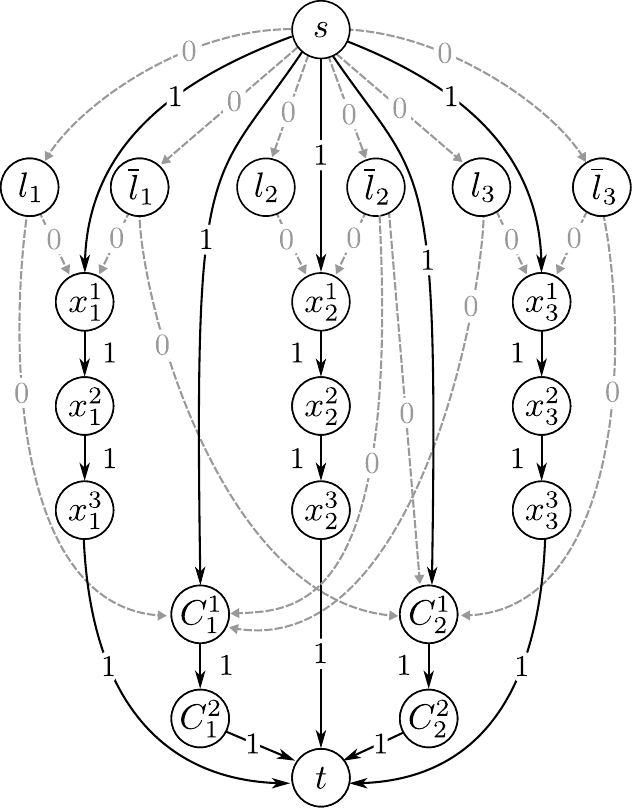}
			\caption{Unique optimal nominal solution with flow value $n+m$.}
			\label{fig:MF_DS_P_N} 
		\end{subfigure}
		\begin{subfigure}{0.45\textwidth}
			\centering
			\includegraphics{./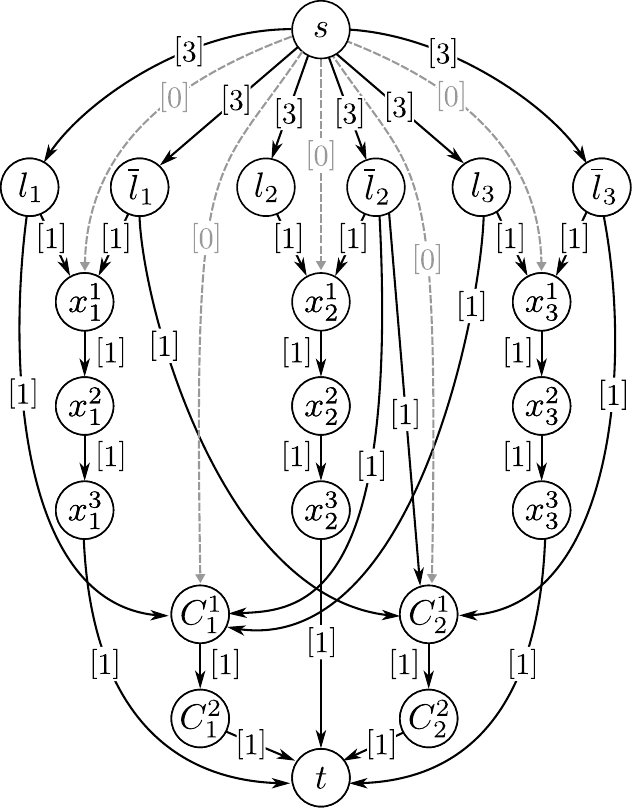}
			\caption{Capacities of the scenario.}
			\label{fig:MF_DS_P_RComplet} 
		\end{subfigure}\hfill
		\begin{subfigure}{0.45\textwidth}
			\centering
			\includegraphics{./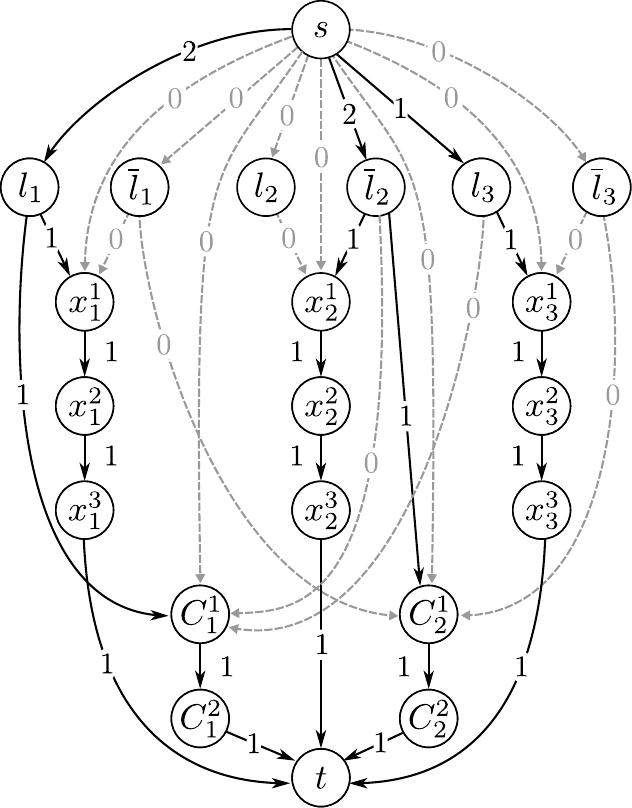}
			\caption{Flow of the scenario of an optimal solution.}
			\label{fig:MF_DS_P_RSolution} 
		\end{subfigure}
	
	  \caption{{Example of the reduction to $MF^p_{d_{struct}}$ of an instance of the $3$-SAT problem with
	  $3$ variables and $2$ clauses $C_1=(x_1\vee\overline x_2\vee x_3)$ and
	  $C_2=(\overline x_1\vee\overline x_2\vee\overline x_3)$. The arcs with a capacity or a flow of value $0$ are colored in gray.}}
	
	\label{fig:MF_DS_P} 
	  
	\end{figure}
	
	\begin{table}
		\centering\begin{tabular}{lcc}
			\hline
			\multirow{2}{*}{\textbf{Arc sets}} & \multicolumn{2}{c}{\textbf{Capacities}} \\
				& $u$ & $u^\xi$\\
	
			\hline
			$A_{s,l}$  & $0$ & $m+1$ \\

			$A_{s,x}$, $A_{s, C}$ & $1$ & $0$\\

			$A_{l,x}$, $A_{l,C}$ & $0$ & $1$\\

			$A_{x}$, $A_{x,t}$, $A_C$, $A_{C,t}$ & $1$ & $1$\\

			 \hline
	
		\end{tabular}
		\caption{Capacities of each set of arcs for the nominal case and scenario $\xi$ in instance $I_{MF}$.}
		\label{tab:MF_DS_P}
	\end{table}

\vspace{.3cm}
\noindent{\textbf{Solving $I_{SAT}$ from a solution of $I_{MF}$}}

{Let $f$  be a nominal flow of $I_{MF}$. As examplified in
Figure~\ref{fig:MF_DS_P_N}, the maximal nominal flow is $c^*=n+m$ and it can only be reached by a unique flow which sends:}
   
   \begin{itemize}
   
   \item {$1$ unit for each clause $C_p$
   through path $(s, C^1_p, C^2_p,t)$; and}
   
   \item {$1$ unit for each variable  $x_i$
	through path $(s,x^1_i, x^2_i, x^3_i,t)$.}
   
   \end{itemize}

{Let $f^\xi$ be the flow of the
scenario. An empty flow $f^\xi$ would leads to a solution cost of $4n+3m$ (i.e., the
number of arcs used by the nominal flow). This can be improved by sending, for
each variable $x$, $1$ unit of flow along one of the two following paths:
$P_x=(s,l, x^1, x^2, x^3, t)$ or $\overline P_x=(s, \overline l, x^1, x^2, x^3, t)$.
Let assume without loss of generality
 that only $P_x$ is used. This leads to a reduction of $1$ of the solution
cost. Indeed, $3$ arcs of the path are also used in the nominal flow ($(x^1, x^2)$,
$(x^2, x^3)$, $(x^3, t)$) and $2$ are not ($(s, l)$ and $(l, x^1)$). 
Note that both $P_x$ and $\overline P_x$ cannot be used simultaneously as the capacity of $(x^1, x^2)$ is equal to $1$.
%If both $P_x$ and $\overline P_x$ were used, the solution cost would instead increaseby $1$ due to the additional use of $(s, \overline l)$ and $(\overline l,x^1)$. 
Thus, the solution cost of $f^\xi$ is reduced by $n$ by sending $1$ unit of flow on either $P_x$ or $\overline P_x$ for each variable $x$ (see
Figure~\ref{fig:MF_DS_P_RSolution}). Let $A^\xi_{sl}$ be the subset of arcs of  $A_{sl}$ now
used in $f^\xi$.}

 {To further reduce the solution cost, $f^\xi$ must use arcs from $A_C$ and
 $A_{C,t}$. The clause nodes associated with a clause $C$ can only be reached
 by $f^\xi$ through a path $(s, l, C^1, C^{2}, t)$ with $l$ one of the
 three literals included in $C$. If $(s,l)\in A^\xi_{sl}$ the use of this path
 reduces by $1$ the solution cost (since $(C^1, C^2)$ and $(C^2,t)$ are also used in
 $f$ and $(l, C^1)$ is not), otherwise the solution cost remains the same. Thus,
 the solution cost can be further reduced by at most  $m$ if each clause node is reached
 through a path which includes an arc in $A^\xi_{sl}$.}

{In conclusion, there exists an optimal solution to $I_{MF}$ with  solution cost equal to $3n+2m$ if and only if $I_{SAT}$ is satisfiable. In that case, the solution of $I_{SAT}$ consists in fixing to true the variables $x_i$ such that $f^\xi_{s,l_i}>0$ and to false the others. }

%to the literal $f^\xi$
%reaches all the variable  and clause nodes while only reaching one literal
%node ($l_i$ or $\overline l_i$) for each $i\in\s{1, ..., n}$ (i.e., if
%assigning the value true to each of the literal nodes reached enables to
%satisfy all the clauses of $I_{SAT}$).} 

%  $P\cap A_{l}$ are
%  also used in $f$, using this path leads to a reduction of the solution cost of
%  $2m$; otherwise it is increased by $1$. Since $f^\xi$ is optimal, it only uses
%  the paths which reduce the cost. Each clause node $C^1$ which is not reached
%  lead to a solution cost of $2m+1$ (one for each arc $(C^j, C^{j+1})\;\forall
%  j\in\s{1, ..., 2m}$ and one for $(C^{2m+1}, t)$).
	\end{proof}
	
\begin{corollary}$MF_{d_{struct}}^r$ is $\NP$-hard.
\end{corollary}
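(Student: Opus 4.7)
The plan is to mimic the argument used to derive $NP$-hardness of $MCF_{d_{struct}}^r$ from Theorem~\ref{thm:MCFadaptD}. I will reuse the reduction constructed in the proof of Theorem~\ref{thm:MFadaptD}, which encodes a $3$-SAT instance as an instance $I_{MF}$ of $MF_{d_{struct}}^p$ with exactly one scenario $\xi$. The key observation I would exploit is already established inside that proof: the capacities chosen for the nominal graph force $c^{*}=n+m$, and this value can be attained by a \emph{unique} nominal flow $f$, namely the one that routes one unit for each clause $C_p$ along $(s,C_p^1,C_p^2,t)$ and one unit for each variable $x_i$ along $(s,x_i^1,x_i^2,x_i^3,t)$.

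Because $f$ is uniquely determined by the data of $I_{MF}$, I can compute it in polynomial time directly from the construction and hand it over as part of the input rather than leaving it as a decision of the proactive problem. The resulting instance has a fixed nominal flow $x^{nom}=f$, a single scenario $\xi$ with the capacities $u^{\xi}$ described in Table~\ref{tab:MF_DS_P}, and asks for a feasible scenario flow $x^{r}$ minimizing $d_{struct}(x^{r},x^{nom})$. This is, by definition, an instance of $MF_{d_{struct}}^r$.

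The correctness of the reduction is then inherited verbatim from the proof of Theorem~\ref{thm:MFadaptD}: the bound on the solution cost of $f^{\xi}$ and the equivalence between reaching the value $3n+2m$ and finding a satisfying assignment of the original $3$-SAT instance depend only on $f$ being the unique optimal nominal flow and on the single scenario $\xi$, not on the fact that $f$ is jointly optimized. Hence the $3$-SAT instance is satisfiable if and only if the constructed $MF_{d_{struct}}^r$ instance has optimal value $3n+2m$.

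The only point that requires a second of attention is that the transformation remains polynomial, which is clear since the graph, the capacities and the nominal flow $f$ all have size polynomial in $n$ and $m$; no real obstacle appears because all the combinatorial work is already carried out in the proof of Theorem~\ref{thm:MFadaptD}. The main (mild) subtlety is simply to justify uniqueness of $f$, but this follows immediately from the fact that each arc entering $t$ has nominal capacity $1$ and the maximum flow saturates all $n+m$ such arcs, forcing the decomposition above.
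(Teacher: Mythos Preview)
Your proposal is correct and follows essentially the same approach as the paper: you reuse the single-scenario reduction from Theorem~\ref{thm:MFadaptD}, observe that the optimal nominal flow is unique and hence can be supplied as input, and conclude that the resulting instance is precisely an instance of $MF_{d_{struct}}^r$ whose optimal value equals $3n+2m$ if and only if the underlying $3$-SAT instance is satisfiable. The paper's own proof is terser but makes exactly the same two observations.
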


\begin{proof} 

{The reduction used in the proof of Theorem~\ref{thm:MFadaptD} can also be used
to prove this theorem since it only requires one scenario. The main difference
is that the only feasible nominal solution (examplified in
Figure~\ref{fig:MF_DS_P_N}), is given as an input of the reactive problem.}

\end{proof}

  \renewcommand{\arraystretch}{1.3}
\section{A case study - The Line Optimization Problem with uncertain demands}
\label{sec:experiments}

{In this section, we apply our solution robustness approach to a railroad planning problem inspired from~\cite{bussieck1997optimal,goossens2004branch}. We compare our approach with two approaches from the literature and highlight their similarities and differences. We then assess the solution cost reduction when relaxing the optimality constraint on the nominal objective of our approach. Finally, we show the efficiency of the proactive approach over the reactive approach.}

The Line Optimization Problem occurs in a railway system with periodic timetables. A line in an urban transportation network is a path from a departure  station to an arrival station  with stops in intermediary stations. The frequency of a line is the number of times a train has to be operated on this line in a given time interval in order to cover passenger demands. We consider the problem of determining the lines deployment and frequencies in order to minimize  the line costs.

\subsection{Deterministic problem description}
\label{sec:det_ol}
We are given the set $\SetStations$ of stations and the network $\mathcal N=(\SetStations, E,d)$ where edges of $E$ represent the undirected links between stations and $d_e$ is the length of $e\in E$.
 An origin-destination matrix OD provides the number of passengers that need to travel from any station $s$ to any other station $s'$, within one hour.

We consider a set  $L$ of potential lines. A  line $\ell$  is characterised  by a  pair $(s^1_\ell,  s^2_\ell)$ of
departure  and  arrival  stations  together with  a shortest  path linking the two stations.  Trains are planned to run on the deployed lines and all trains are supposed to have the same carriage capacity of $C$ passengers. Therefore, if a line $\ell$ is scheduled  with frequency $f$, then up to $C \times f$ passengers per hour may be carried on each edge of line $\ell$. Due to technical considerations, the frequency of any line is limited by a given value   $\mbox{MF}\in\mathbb N$. We are also given the cost $K_\ell$ of deploying line $\ell$ and the unitary cost $K'_\ell$ associated to frequencies on line $\ell$.

We introduce a binary variable $x_\ell$  equal to $1$ if and only if
line $\ell\in L$ is
deployed  and  $0$  otherwise.  When  $\ell$  is  deployed,  variable
$f_\ell$ indicates  the hourly  frequency of  line $\ell$.  A solution
$(x, f)$ is feasible for a given origin-destination matrix OD if it is included in set $\mathcal F(\mbox{OD})$ defined by the following constraints:

\begin{numcases}{\mathcal  F(\mbox{OD})}
 C  \times  \displaystyle\sum_{\ell\in  L:   e  \in  \ell}  f_{\ell}  \geq
      \displaystyle\sum_{\ell\in L: e \in \ell} \mbox{OD}(s^1_\ell, s^2_\ell) & $e \in E$ \label{eq:demand}\\
f_\ell \leq \mbox{MF}\,x_\ell & $\ell\in L.$ \label{eq:link}\\
x_\ell \leq f_\ell & $\ell\in L$ \label{eq:link2}\\
f_\ell  \in \mathbb{Z}_+ & $\ell\in L$ \nonumber\\
x_\ell \in \{0,1\} & $\ell\in L$ \nonumber
\end{numcases} 

  \noindent Constraints~\eqref{eq:demand} force the demand satisfaction of each edge. Constraints~\eqref{eq:link} and~\eqref{eq:link2} link variables $x_\ell$ and $f_\ell$. If line $\ell$ is not deployed then $x_\ell$ and $f_\ell$ are equal to zero, otherwise $f_\ell$ is not larger than $\mbox{MF}$.

\bigskip

The nominal objective of a solution $(x, f)$ is defined as

\begin{equation}
  \label{eq:deterministicObjective}
  \mathcal NO(x, f) = \sum_{\ell\in L}(K_\ell x_\ell + K'_\ell f_\ell)
\end{equation}

Let OD$^0$ be the nominal origin-destination matrix. The deterministic model
is defined as

\begin{center}
  $\overline P\left\{
    \begin{array}{ll}
 \min ~  \mathcal NO(x^0, f^0)\\
\mbox{s.t.}  ~(x^0, f^0)\in \mathcal F(OD^0)
% \sum_{\ell\in L: e \in \ell} f^0_{\ell} \geq \mbox{MFE}_e & e \in E \\
    \end{array}
\right.$
\end{center}

We now assume that the OD-matrix is uncertain within a set $\mathcal S$ of scenarios. We are given the  set
$\s{OD^1, ..., OD^{|\mathcal S|}}$ of alternative matrices. {To handle the uncertainty, this case study compares three robust approaches:  our proactive approach, the anchored approach~\cite{bendotti2019anchor} and the $k$-distance approach~\cite{busing2012recoverable}. Each approach either constrains or optimizes differences between the nominal solution and the solution of the scenarios. Our proactive approach minimizes an average distance between the nominal and the scenario solutions. The anchored approach maximises the number of variables which value is identical in all solutions while the $k$-distance approach requires that the value of at most $k$ variables can differ in the nominal solution and the solution of each scenario.} 

{We consider two cases in which the differences between solutions are either measured on the frequency variables $f$ or on the  line deployment variables $x$. For the proactive approach this can be viewed as considering the distance in values $d_{val}$ over $f$ in the first case and  the distance in structure $d_{struct}$ over the lines deployment variables $x$ in the second case.}

{We show  how this problem can be modelled for each approach as a compact integer linear program. The compactness of the formulations is made possible by the fact that the uncertainty is represented by a finite set of scenarios $\mathcal S$ rather than by an uncertainty set.}

\subsection{Solution cost based on frequencies}
\label{sec:robfreq}

{In this section, the differences between solutions are measured through the frequency variables $f^0$ of the nominal solution and $\s{f^s}_{s\in\mathcal S}$ of the scenario solutions.}
\medskip

\noindent \textbf{Proactive approach}\\

As presented  previously,  the proactive approach minimizes the {average solution cost between  the
nominal solution $(x^0, f^0)$ and the scenario solutions $\s{(x^s, f^s)}_{s\in \mathcal S}$} while {setting the nominal objective $\mathcal NO(x^0, f^0)$  to its optimal value $c^*$}. {The  solution cost based on frequencies corresponds to the distance in values $d_{val}$ from Definition~\ref{def:value} applied to the frequency variables $f^0$ and $\s{f^s}_{s\in\mathcal S}$ (i.e., $\sum_{s\in \mathcal S}\sum_{\ell\in L} |f^0_{\ell}-f^s_{\ell}|$)}. We consider variable $df_\ell^s$  equal to
$|f^s_\ell-f^0_\ell|$ for each $\ell\in L$ and $s \in\mathcal S$:

\begin{center}
  $P^p_f(\mathcal S, c^*)\left\{
    \begin{array}{lll}
 \min & \sum_{s\in\mathcal S}\sum_{\ell\in L} df^s_\ell\\
      \mbox{s.t.} & (x^0, f^0)\in \mathcal F(OD^0)\\
&      (x^s, f^s)\in \mathcal F(OD^s) & s\in\mathcal S\\
  &     \mathcal NO(x^0, f^0)= c^*\\
    %&     \mathcal NO(x^0, f^0)\leq c^* (1+\varepsilon)\\
 &     f^s_\ell - f^0_\ell \leq df^s_\ell & s\in\mathcal S,~\ell\in L\\
 &     -f^s_\ell + f^0_\ell \leq df^s_\ell & s\in\mathcal S,~\ell\in L
    \end{array}
\right.$
\end{center}

\bigskip

\noindent \textbf{Anchored approach}\\

A nominal variable is said to be \textit{anchored} {if its value in the nominal solution and in all scenario solutions are equal}.  For each
line $\ell\in L$,  variable  $a_\ell$ is equal to $1$ if  $f^0_\ell$ is anchored and $0$ otherwise. This
approach  maximizes the  number of  anchored variables  while ensuring
that the nominal objective of the nominal solution is optimal:

\begin{center}
  $P^a_f(\mathcal S, c^*)\left\{
    \begin{array}{lll}
 \max & \sum_{\ell\in L} a_\ell\\
      \mbox{s.t.}  &(x^0, f^0)\in \mathcal F(OD^0)\\
&      (x^s, f^s)\in \mathcal F(OD^s) & s\in\mathcal S\\
&       \mathcal NO(x^0, f^0)= c^*\\
      & f^s_\ell - f^0_\ell \leq \mbox{MF}( 1-a_\ell)  & s\in\mathcal S,~\ell\in L\\
      & -f^s_\ell + f^0_\ell \leq \mbox{MF}( 1-a_\ell)  & s\in\mathcal S,~\ell\in L\\
       & a_\ell\in\s{0, 1} & \ell\in L
    \end{array}
\right.$
\end{center}

\noindent In any optimal solution, $a_\ell$ is equal to one unless $( f^s_\ell - f^0_\ell)$ is different from zero for some scenario $s$.

\bigskip

\noindent \textbf{$k$-distance approach}\\

In this approach,  for each  scenario $s\in\mathcal S$, the number of variables  $f^0_\ell$ and $f^s_\ell$ which have different
values is limited to $k\in\mathbb N$ and the nominal objective of the nominal solution $x^0$ is minimized. 
Variable $\delta^s_\ell$ is equal to $1$ if
and only if the values of  $f^0_\ell$ and $f^s_\ell$ are different and
$0$ otherwise. 

\begin{center}
  $P^d_f(\mathcal S, k)\left\{
    \begin{array}{lll}
 \min &  \mathcal NO(x^0, f^0)\\
      \mbox{s.t.}& (x^0, f^0)\in \mathcal F(OD^0)\\
&      (x^s, f^s)\in \mathcal F(OD^s) & s\in\mathcal S\\
& f^s_\ell - f^0_\ell \leq \mbox{MF}~ \delta^s_\ell  & s\in\mathcal S,~\ell\in L\\
& -f^s_\ell + f^0_\ell \leq \mbox{MF}~ \delta^s_\ell  & s\in\mathcal S,~\ell\in L\\
& \sum_{\ell\in L} \delta^s_\ell\leq k & s\in\mathcal S\\

      &  \delta^s_\ell \in\s{0, 1} &  s\in\mathcal S,~\ell\in L
    \end{array}
\right.$
\end{center}

\subsection{Solution cost based on lines deployment}
\label{sec:roblines}

{In this section, the differences between the solutions are measured through the lines deployment variables $x^0$ of the nominal solution and $\s{x^s}_{s\in\mathcal S}$ of the scenario solutions.}
\medskip

\noindent \textbf{Proactive approach}\\

{The  solution cost based on lines deployment corresponds to the distance in structure $d_{struct}$ from Definition~\ref{def:structure} applied to the lines deployment variables $x^0$ and $\s{x^s}_{s\in\mathcal S}$ (i.e., $\sum_{s\in \mathcal S}\sum_{\ell\in L} |\mathbb 1_{x^0_{\ell}>0}-\mathbb 1_{x^s_{\ell}>0}|$)}.
 We consider variable $dx_\ell^s$  equal to
{$|\mathbb 1_{x^s_\ell>0}-\mathbb 1_{x^0_\ell>0}|$} for each $\ell\in L$ and $s\in\mathcal S$.

\begin{center}
  $P^p_x(\mathcal S, c^*)\left\{
    \begin{array}{lll}
 \min & \sum_{s\in \mathcal S}\sum_{\ell\in L} dx^s_\ell\\
      \mbox{s.t.} & (x^0, f^0)\in \mathcal F(OD^0)\\
&      (x^s, f^s)\in \mathcal F(OD^s) & s\in\mathcal S\\
  &     \mathcal NO(x^0, f^0)= c^*\\
 &     x^s_\ell - x^0_\ell \leq dx^s_\ell & s\in\mathcal S,~\ell\in L\\
 &     -x^s_\ell + x^0_\ell \leq dx^s_\ell & s\in\mathcal S,~\ell\in L
    \end{array}
\right.$
\end{center}

\bigskip

\noindent \textbf{Anchored approach}\\

For each
line $\ell\in L$, the binary  variable  $a_\ell$ is equal to $1$ if and
only if $x^0_\ell$ is anchored i.e. $x^s_\ell=x^0_\ell$ $\forall s\in\mathcal S$.

\begin{center}
  $P^a_x(\mathcal S, c^*)\left\{
    \begin{array}{lll}
 \max & \sum_{\ell\in L} a_\ell\\
      \mbox{s.t.}  &(x^0, f^0)\in \mathcal F(OD^0)\\
&      (x^s, f^s)\in \mathcal F(OD^s) & s\in\mathcal S\\
&       \mathcal NO(x^0, f^0)= c^*\\
      &x^s_\ell - x^0_\ell \leq 1-a_\ell & s\in\mathcal S,~\ell\in L\\
      &- x^s_\ell + x^0_\ell \leq 1-a_\ell & s\in\mathcal S,~\ell\in L\\

 & a_\ell\in\s{0, 1} & \ell\in L

    \end{array}
\right.$
\end{center}

%In that case, the integrity constraints of variables $a_l^s$ can be removed as it is ensured by the other constraints of the model.

\noindent \textbf{$k$-distance approach}\\

Variable $\delta^s_\ell$ is equal to $1$ if
and only if the values of  $x^0_\ell$ and $x^s_\ell$ are different and
$0$ otherwise. 

\begin{center}
  $P^d_x(\mathcal S, k)\left\{
    \begin{array}{lll}
 \min &  \mathcal NO(x^0, f^0)\\
      \mbox{s.t.}& (x^0, f^0)\in \mathcal F(OD^0)\\
&      (x^s, f^s)\in \mathcal F(OD^s) & s\in\mathcal S\\
& x^s_\ell - x^0_\ell \leq \delta^s_\ell &  s\in\mathcal S,~\ell\in L\\
& -x^s_\ell + x^0_\ell \leq \delta^s_\ell &  s\in\mathcal S,~\ell\in L\\
& \sum_{\ell\in L} \delta^s_\ell\leq k & s\in\mathcal S\\

      & \delta_\ell^s\in\s{0, 1} &  s\in\mathcal S,~\ell\in L
    \end{array}
\right.$
\end{center}

%In that case, the integrality constraints of variables $\delta_l^s$ can be removed as they are ensured by the other constraints of the model.

% \noindent\textbf{Additional remarks}

% Note that in  $P^p$, the weight of  each scenario is fixed  to $1$ and
% that the same applies to the weight of the anchored variables in $P^a$.

\subsection{Numerical results}

We adapt an instance from data considered in~\cite{bussieck1997optimal}\footnote{\url{https://www.gams.com/latest/gamslib_ml/libhtml/gamslib_lop.html}} which provides an OD-matrix $OD^0$ and a network $\mathcal N=(\SetStations, E, d)$ with  $|V|=23$, $|E|=31$ and $|L|=210$. The set of lines $L$ contains one line $\ell_{ij}$ for each  pair of stations $i,j\in \SetStations$ such that $OD^0_{ij}>0$ or $OD^0_{ji}>0$. %Line $\ell_{ij}$ is composed of all the stations on the shortest path between $i$ and $j$ in $\mathcal N$. 

%  and operating costs $c_\ell$ for each line $\ell\in L$

The maximal frequency MF is fixed to $6$ which corresponds to  $1$ train on the line each $10$ minutes when a time interval of $1$ hour is considered. The line opening costs $\s{K_\ell}_{\ell\in L}$ are deduced from {the costs provided in the instance} and have mean values of $1300$. A fixed frequency cost $\s{K'_\ell}_{\ell\in L}$ of 100 is considered. The capacity of each train is fixed to $C=200$.

A set of $10$ uncertain OD-matrices $\s{OD^1, ..., OD^{10}}$ is generated. To create a reasonable deviation from the nominal OD matrix $OD^0$, the value $OD_{ij}^s$ has a $20\%$ chance of being equal to $OD^0_{ij}$, otherwise it is uniformly drawn from  $[0.9\times OD^0_{ij}, 1.1\times OD^0_{ij}]$. Consequently, $OD^s_{ij}=0$ whenever $OD^0_{ij}=0$.

% $5$ percentages ($0\%$, $0.05\%$, $1\%$, $2\%$, $5\%$) of the maximal number of differences which is $210$. This leads to the following values of $k$: 

\subsubsection{Approaches comparison}

The results obtained for the robustness in frequencies and the robustness in lines deployment for the considered approaches are presented in Table~\ref{tab:freqApproaches} and~\ref{tab:ldApproaches}, respectively. All the problems have been solved to optimality in less than an hour.

 In these tables, three instances with the first 2, the first 6, and all 10 scenarios are considered and this number of scenarios is reported in the first column. The $k$-distance approach requires to fix the value of parameter $k\in\mathbb N$ which represents the maximal number of differences between the nominal solution and any scenario solution. Since there is no a priori relevant choice for  this parameter we set its value to $0$, $1$, $2$, $4$ and $10$. Consequently, seven lines are associated to each instance, one for the proactive approach, one for the anchored approach, and five for the $k$-distance approach. The proactive and the anchored approaches both ensure that the nominal objective of the nominal solution is equal to its optimal value $c^*$. This value is obtained by the resolution a priori of the nominal problem $\overline P$. {Note that $\overline P$ is only solved once as $c^*=81742$ does not depend on the number of scenarios nor the solution cost.} %The pre-computation of $c^*$ is rather easy in practice since the resolution of the deterministic problem appears to be significantly easier than the resolution of its robust counterparts.  

 {For each line of Table~\ref{tab:freqApproaches}, a nominal solution $(x^0, f^0)$ as well as scenario solutions $\s{(x^s, f^s)}_{s\in\mathcal S}$ are obtained by solving $P^p_f(\mathcal S, c^*)$, $P^a_f(\mathcal S, c^*)$, or $P^k_f(\mathcal S, k)$. From these solutions, we compute a posteriori distance $d_{val}=\sum_{s\in\mathcal S}\sum_{\ell\in L} |f^s_\ell-f^0_\ell|$, the number of anchored lines frequency $\sum_{\ell\in L} \mathbb 1_{f^0=f^s\,    \forall s\in\mathcal S}$ and the nominal objective $\mathcal NO (x^0, f^0)$ through~\eqref{eq:deterministicObjective}.} The nominal objective is reported in the third column. {Since its optimality  is imposed by  the proactive and the anchored approach, in both cases it is equal to $c^*=81742$ regardless of the number of scenarios.} This objective is larger for the $k$-distance approach whenever $k$ is small. This highlights a drawback of the $k$-distance approach which does not allow  to constrain  the nominal objective of the nominal solution. In other applications, a low value of $k$ could even lead to infeasibilities.
 The number of differences in frequencies 
$d_{val}$  is reported in the fourth column of Table~\ref{tab:freqApproaches}. {The proactive approach computes a nominal solution of nominal objective $81742$ which additionally minimizes this number of differences ({e.g.}, $50$ for two scenarios).} The number of anchored  lines frequency  is reported in the fifth column. {Among all solutions of nominal objective value $81742$, the anchored approach returns one which maximizes the number of anchored lines (e.g., $200$ over $210$ for two scenarios)}. Table~\ref{tab:ldApproaches} contains similar results. Nominal and scenario solutions are computed by solving $P^p_x(\mathcal S, c^*)$, $P^a_x(\mathcal S, c^*)$, or $P^d_x(\mathcal S, k)$. The fourth column contains the number of differences in lines deployment  
$d_{struct}=\sum_{s\in\mathcal S}\sum_{\ell\in L} |x^s_\ell-x^0_\ell|$ while the fifth columns contains the number of lines which deployment is  anchored $\sum_{\ell\in L} \mathbb 1_{x^0=x^s\, \forall s\in\mathcal S}$. 

{The proactive and the anchored approaches do not lead to the same solutions as the solution cost is not optimal in the anchored approach and the number of anchored lines is not optimal in the proactive approach.}
These differences are increased with the number of scenarios as it becomes harder to satisfy both objectives simultaneously.  Note that the increase in {$d_{val}$ and $d_{struct}$}  provided by the anchored approach is often greater than the decrease in number of variables anchored  in the proactive approach. This is due to the fact that anchoring a variable is quite constraining since it requires its value to be identical in the nominal solution and in all the scenario solutions. The advantage is that it guarantees that parts of the nominal solution will not be disrupted. However, this comes at a price in terms of flexibility which is reflected by an increase of the {solution costs $d_{val}$ and $d_{struct}$}. In the $k$-distance approach, an increase of the nominal objective for low values of $k$ enables to obtain better {solution costs} than the proactive approach and better numbers of variables anchored than the anchored approach. {In particular, when $k=0$ the scenario solutions are necessarily identical to the nominal solution which leads to a solution cost of $0$ and an anchoring of all the $210$ lines.} However, we observe a quick deterioration of these two objectives when $k$ increases showing once again that the choice of parameter $k$ can be challenging. {Moreover, the value $k$ is shared by all scenarios while some of them may require less changes than others which can lead to less suitable scenario solutions.} 

%The results in Table~\ref{tab:freqApproaches} and~\ref{tab:ldApproaches} are similar. The only notable difference is that {$d_{struct}$ leads to smaller values  than $d_{val}$ which is due to the consideration of differences between binary values $\mathbb 1_x$ rather than between integer variables $f$ in $[0, 6]$}.

\begin{table}[h]
\centering
\begin{tabular}{cllr@{}lr@{}l*{20}{l}}
\hline
\multirow{2}{*}{\textbf{$|\mathcal S|$}} & \multirow{2}{*}{\textbf{Method}} & \multirow{2}{*}{$\mathcal NO (x^0, f^0) $} & \multicolumn{2}{c}{\textbf{Lines frequency}}
& \multicolumn{2}{c}{\textbf{Number of}} & 
\\
 & & & \multicolumn{2}{c}{\textbf{differencies} $d_{val}$}
& \multicolumn{2}{c}{\textbf{anchored lines}} & 
\\
\hline
  
  \multirow{7}{*}{2} & Proactive & \textbf{81742} & \textbf{50}& & 196&~(-2\%) & \\
 & Anchored & {81742}& $\quad$76&~(+52\%) & $\quad$\textbf{200}& & \\
 & 0-distance & 89028~(+9\%) & 0&~(-100\%) & 210&~(+5\%) & \\
 & 1-distance & 86663~(+6\%) & 12&~(-76\%) & 208&~(+4\%) & \\
 & 2-distance & 84740~(+4\%) & 19&~(-62\%) & 206&~(+3\%) & \\
 & 4-distance & 82156~(+1\%) & 37&~(-26\%) & 202&~(+1\%) & \\
 & 10-distance & 81742 & 81&~(+62\%) & 196&~(-2\%) & \\
\hline
\multirow{7}{*}{6} & Proactive & \textbf{81742} & \textbf{120}& & 186&~(-7\%) & \\
 & Anchored & {81742} & 246&~(+105\%) & \textbf{199}& & \\
 & 0-distance & 91998~(+13\%) & 0&~(-100\%) & 210&~(+6\%) & \\
 & 1-distance & 87274~(+7\%) & 28&~(-77\%) & 204&~(+3\%) & \\
 & 2-distance & 85178~(+4\%) & 53&~(-56\%) & 200&~(+1\%) & \\
 & 4-distance & 82158~(+1\%) & 105&~(-12\%) & 193&~(-3\%) & \\
 & 10-distance & 81742 & 259&~(+116\%) & 176&~(-12\%) & \\
\hline
\multirow{7}{*}{10} & Proactive & \textbf{81742} & \textbf{196}& & 183&~(-8\%) & \\
 & Anchored & {81742} & 449&~(+129\%) & \textbf{198}&& \\
 & 0-distance & 94308~(+15\%) & 0&~(-100\%) & 210&~(+6\%) & \\
 & 1-distance & 89733~(+10\%) & 44&~(-78\%) & 201&~(+2\%) & \\
 & 2-distance & 86485~(+6\%) & 98&~(-50\%) & 200&~(+1\%) & \\
 & 4-distance & 83224~(+2\%) & 191&~(-3\%) & 187&~(-6\%) & \\
 & 10-distance & 81742 & 436&~(+122\%) & 161&~(-19\%) & \\
 \hline
\end{tabular}\caption{Result of each approach when considering the solution cost based on frequencies. {For a given number of scenarios $|\mathcal S|$, the percentage in a cell corresponds to the relative change between the cell value and the value in bold in the same column.}}
\label{tab:freqApproaches}
\end{table}

\begin{table}[h]
\centering
\begin{tabular}{cllr@{}lr@{}l*{20}{l}}
\hline
\multirow{2}{*}{\textbf{$|\mathcal S|$}} & \multirow{2}{*}{\textbf{Method}} & \multirow{2}{*}{$\mathcal NO (x^0, f^0) $} & \multicolumn{2}{c}{\textbf{Lines deployment}}
& \multicolumn{2}{c}{\textbf{Number of}} & 
\\
 & & & \multicolumn{2}{c}{\textbf{differencies} $d_{struct}$}
& \multicolumn{2}{c}{\textbf{anchored lines}} & 
\\
\hline

 \multirow{7}{*}{2} & Proactive & \textbf{81742} & $\quad$\textbf{10}&  & $\quad$200&~(-1\%) & \\
 & Anchored & {81742}& 13&~(+30\%) & \textbf{202}&& \\
 & 0-distance & 85709~(+5\%) & 0&~(-100\%) & 210&~(+4\%) & \\
 & 1-distance & 84486~(+3\%) & 2&~(-80\%) & 208&~(+3\%) & \\
 & 2-distance & 83340~(+2\%) & 4&~(-60\%) & 206&~(+2\%) & \\
 & 4-distance & 81950~($<$1\%) & 8&~(-20\%) & 203&~($<$1\%) & \\
 & 10-distance & 81742 & 17&~(+70\%) & 195&~(-3\%) & \\
\hline
\multirow{7}{*}{6} & Proactive & \textbf{81742} & \textbf{23}& & 196&~(-3\%) & \\
 & Anchored & {81742}& 40&~(+74\%) & \textbf{202}& & \\
 & 0-distance & 87598~(+7\%) & 0&~(-100\%) & 210&~(+4\%) & \\
 & 1-distance & 84774~(+4\%) & 6&~(-74\%) & 204&~(+1\%) & \\
 & 2-distance & 83621~(+2\%) & 11&~(-52\%) & 202& & \\
 & 4-distance & 81952~($<$1\%) & 22&~(-4\%) & 195&~(-3\%) & \\
 & 10-distance & 81742 & 53&~(+130\%) & 177&~(-12\%) & \\
\hline
\multirow{7}{*}{10} & Proactive & \textbf{81742}& \textbf{37}&& 190&~(-5\%) & \\
 & Anchored & {81742} & 69&~(+86\%) & \textbf{200}&& \\
 & 0-distance & 88907~(+9\%) & 0&~(-100\%) & 210&~(+5\%) & \\
 & 1-distance & 85921~(+5\%) & 10&~(-73\%) & 203&~(+2\%) & \\
 & 2-distance & 84045~(+3\%) & 17&~(-54\%) & 198&~(-1\%) & \\
 & 4-distance & 82634~(+1\%) & 37& & 191&~(-4\%) & \\
 & 10-distance & 81742 & 98&~(+165\%) & 165&~(-18\%) & \\
\hline
 
\end{tabular}\caption{Result of each approach when considering the solution cost based on lines deployment. {For a given number of scenarios $|\mathcal S|$, the percentage in a cell corresponds to the relative change between the cell value and the value in bold in the same column.}}
\label{tab:ldApproaches}
\end{table}

\subsubsection{Flexibility on the nominal objective}

Imposing an optimal  nominal objective value to the nominal solution of the proactive approach may be too restrictive as small increases of the nominal objective may lead to significant decreases of the solution cost. As presented in Equation~\eqref{eq:pro_price_eps}, a parameter $\varepsilon\geq 0$ which corresponds to an acceptable percentage of increase of the nominal objective can be introduced for this purpose. Consequently, the constraint in $P^p_f$ and $P^p_x$ which ensures that the nominal objective is equal to $c^*$ can be replaced by

\begin{equation}  
  \mathcal NO(x^0, f^0)\leq c^* (1+\varepsilon).
\end{equation}

Tables~\eqref{tab:freqEps} and~\eqref{tab:ldEps} present the results obtained for four values of $\varepsilon$ ($0\%$, $1\%$, $2\%$ and $5\%$) for the robustness in frequencies and the robustness in lines deployment, respectively. In both cases, allowing an increase of the nominal objective enables to significantly reduce the solution cost and increase the number of anchored variables. In particular, for $\varepsilon$ equals to $5\%$, the solution cost is at most equal to $1$ and the number of anchored variables is at least equal to $209$ over $210$ lines. 

Similar results have been observed in Table~\ref{tab:freqApproaches} and~\ref{tab:ldApproaches} with the recoverable robustness approach for small values of $k$. The major difference with the proactive approach is that it seems easier for a decision maker to chose an acceptable percentage~$\varepsilon$ of deterioration of the nominal objective  rather than a maximal number $k$ of differences allowed in terms of variables values. 

\begin{table}[h]
\begin{subtable}{0.45\linewidth}
    
 \centering\begin{tabular}{@{}cccc@{}*{20}{l}}
\hline\multirow{2}{*}{\textbf{$|\mathcal S|$}} & \multirow{2}{*}{$\mathbf{\varepsilon}$} & \multirow{2}{*}{$\mathcal NO (x^0, f^0) $} & \textbf{Lines frequency}%\textbf{Solution cost} 
%& \textbf{Anchor} & 
\\
& & & \textbf{differences $d_{val}$}\\
\hline
  
  \multirow{4}{*}{2} & 0\% & 81742 & 50 & \\
 & 1\% & 82557 & 29 & \\
 & 2\% & 83348 & 24 & \\
 & 5\% & 85797 & 11 & \\
\hline
\multirow{4}{*}{6} & 0\% & 81742 & 120 & \\
 & 1\% & 82558 & 71 & \\
 & 2\% & 83339 & 55 & \\
 & 5\% & 85789 & 32 & \\
\hline
\multirow{4}{*}{10} & 0\% & 81742 & 196 & \\
 & 1\% & 82558 & 121 & \\
 & 2\% & 83338 & 93 & \\
 & 5\% & 85790 & 59 & \\
\hline

\end{tabular}\caption{Solution cost based on the frequencies.}
\label{tab:freqEps}
\end{subtable}
\begin{subtable}{0.45\linewidth}
    
  \centering\begin{tabular}{@{}cccc@{}*{20}{l}}
  
\hline\multirow{2}{*}{\textbf{$|\mathcal S|$}} & \multirow{2}{*}{$\mathbf{\varepsilon}$} & \multirow{2}{*}{$\mathcal NO (x^0, f^0) $} & \textbf{Lines deployment}
\\
& & & \textbf{differences $d_{struct}$}\\
\hline
  
  \multirow{4}{*}{2} & 0\% & 81742 & 10 & \\
 & 1\% & 82448 & 6\\
 & 2\% & 83340 & 4\\
 & 5\% & 85822 & 0\\
\hline
\multirow{4}{*}{6} & 0\% & 81742 & 23 & \\
 & 1\% & 82349 & 16\\
 & 2\% & 83360 & 11\\
 & 5\% & 85809 & 2\\
\hline
\multirow{4}{*}{10} & 0\% & 81742 & 37 & \\
 & 1\% & 82449 & 25\\
 & 2\% & 83286 & 19\\
 & 5\% & 85734 & 6\\
\hline
\end{tabular}\caption{Solution cost based on the lines deployment.}
\label{tab:ldEps}
\end{subtable}
    \caption{Influence of parameter $\varepsilon$ of our proactive approach on the solution cost.}
    \label{tab:eps}
\end{table}

\subsection{Proactive and reactive approaches comparison}
\label{sec:pro_reac}

{The reactive approach is considered when the uncertainty has not been anticipated and the nominal solution $(x^0, f^0)$ is infeasible for a scenario $s$. In that context, the reactive problem $P^r$ provides a reactive solution $(x^r, f^r)$ feasible for $s$ and which solution cost with $(x^0, f^0)$ is minimal. Problem $P^r$ is similar to the proactive problem $P^p$ except that only one scenario $s\in \mathcal S$ is considered and that the nominal solution $(x^0, f^0)$ is given as an input rather than an output. The reactive line optimization problem associated with the solution cost based on frequencies can be modelled by considering a binary variable $d_\ell$ equal to $1$ if and only if $f^r_\ell$ is equal to $f^0_\ell$ for each $\ell\in L$:}
 
\begin{center}
  $P^r(s, x^0, f^0)\left\{
    \begin{array}{ll@{\hspace{-.1cm}}l}
 \min & \sum_{\ell\in L} d_\ell\\
      \mbox{s.t.}&      (x^r, f^r)\in \mathcal F(OD^s)\\
 &     f^r_\ell - f^0_\ell \leq d_\ell & \ell\in L\\
 &     -f^r_\ell + f^0_\ell \leq d_\ell & \ell\in L
    \end{array}
\right.$
\end{center}
\vspace{.3cm}

{We do not present the reactive model for the solution cost based on lines deployment as  it  only requires in the  last two sets of constraints to replace $f^r_\ell$ and $f^0_\ell$ by $x^r_\ell$ and $x^0_\ell$, respectively.}

{Let $Q^*$ be the set of nominal solutions with an optimal nominal objective $c^*$. The aim of our proactive approach can be viewed as finding a solution in $Q^*$ which minimizes the sum of the solution costs over the scenarios in set $\mathcal S$. To assess the efficiency of the proactive solution, we compare its solution cost to the ones of four other solutions from $Q^*$. These four solutions have been selected to be as different as possible  in order to be representative of $Q^*$ (see Appendix~\ref{sec:div_sol} for more details on the selection process).}

{Tables~\ref{tab:reacF} and~\ref{tab:reacX} present the results obtained when considering the solution cost based on frequencies and lines deployment, respectively. {For each considered nominal solution $(x, f)$, the solution cost $v(P^r(s, x, f))$ of any scenario $s\in\mathcal S$ is obtained by solving the reactive problem $P^r(s, x, f)$. The sum of the solution costs over all scenarios in $\mathcal S$ is then obtained by computing $\sum_{s\in\mathcal S} v(P^r(s, x, f))$ and this value is represented in the third column of both tables.}   The proactive solution necessarily returns the optimal solution cost and the others lead to a mean increase of $8\%$ with a maximal increase of $24\%$. This significant variability of the solution cost among solutions from $Q^*$ highlights the relevance of considering the proactive approach. In Table~\ref{tab:reacF}, the first reactive solution is very similar to the proactive solution as its solution cost is identical for $2$ and $6$ scenarios and it is only incremented for $10$ scenarios. In Table~\ref{tab:reacX} one reactive solution always leads to an optimal solution cost but it is not always the same depending on the number of scenarios. This shows that the choice of the scenarios is a sensitive task that may significantly impact the proactive solution.}

\begin{table}[h]
\begin{subtable}{0.45\linewidth}
\centering\begin{tabular}{cl@{}r@{}l@{}}
\hline
\multirow{2}{*}{$|\mathcal S|$} & \multirow{2}{*}{\textbf{Solution}} & \multicolumn{2}{c}{\textbf{Lines frequency}}\\
& & \multicolumn{2}{c}{\textbf{differences} $d_{val}$}\\\hline
\multirow{5}{*}{2} & Proactive & \textbf{50} & \\
 & Reactive $(x^A, f^A)$ & $\qquad$50 & \\
 & Reactive $(x^B, f^B)$ & 57&~(+14\%) \\
 & Reactive $(x^C, f^C)$ & 62&~(+24\%)  \\
 & Reactive $(x^D, f^D)$ & 56&~(+12\%)  \\
\hline
\multirow{5}{*}{6} & Proactive & \textbf{120}  \\
 & Reactive $(x^A, f^A)$ & 120 & \\
 & Reactive $(x^B, f^B)$ & 123&~(+2\%)  \\
 & Reactive $(x^C, f^C)$ & 133&~(+11\%)  \\
 & Reactive $(x^D, f^D)$ & 121&~(+1\%)  \\
\hline
\multirow{5}{*}{10} & Proactive& \textbf{196} & \\
 & Reactive $(x^A, f^A)$ & 197&~(+1\%)  \\
 & Reactive $(x^B, f^B)$ & 211&~(+8\%)  \\
 & Reactive $(x^C, f^C)$ & 220&~(+12\%) \\
 & Reactive $(x^D, f^D)$ & 204&~(+4\%) \\
\hline
\end{tabular}\caption{Solution cost based on frequencies.}
\label{tab:reacF}
\end{subtable}
\begin{subtable}{0.45\linewidth}
\centering\begin{tabular}{cl@{}r@{}l@{}}
\hline
\multirow{2}{*}{$|\mathcal S|$} & \multirow{2}{*}{\textbf{Solution}} & \multicolumn{2}{c}{\textbf{Lines deployment}}\\
& & \multicolumn{2}{c}{\textbf{differences} $d_{struct}$}\\\hline
\multirow{5}{*}{2} & Proactive& \textbf{10} & \\
 & Reactive $(x^A, f^A)$& 10 & \\
 & Reactive $(x^B, f^B)$& $\qquad\quad$12&~(+20\%) \\
 & Reactive $(x^C, f^C)$& 11&~(+10\%) \\
 & Reactive $(x^D, f^D)$& 11&~(+10\%) \\
\hline
\multirow{5}{*}{6} & Proactive& \textbf{23} & \\
 & Reactive $(x^A, f^A)$& 26&~(+13\%) \\
 & Reactive $(x^B, f^B)$& 28&~(+22\%) \\
 & Reactive $(x^C, f^C)$& 24&~(+4\%)  \\
 & Reactive $(x^D, f^D)$& 23 & \\
\hline
\multirow{5}{*}{10} & Proactive& \textbf{37} & \\
 & Reactive $(x^A, f^A)$ & 38&~(+3\%) \\
 & Reactive $(x^B, f^B)$ & 43&~(+16\%) \\
 & Reactive $(x^C, f^C)$ & 39&~(+5\%)  \\
 & Reactive $(x^D, f^D)$ & 37 & \\
\hline
\end{tabular}\caption{Solution cost based on lines deployment.}
\label{tab:reacX}
\end{subtable}
\caption{Comparison of the proactive and the reactive approaches with four different optimal nominal solutions $(x^A, f^A)$, $(x^B, f^B)$, $(x^C, f^C)$ and $(x^D, f^D)$. {For each solution $(x^i, y^i)$ and each scenario $s\in\mathcal S$ the reactive problem $P^r(s, x^i, y^i)$ is solved. The values in the table correspond to the sum of the solution costs obtained over all scenarios. For a given number of scenarios $|\mathcal S|$, the percentage in a cell corresponds to the relative change between the cell value and the value in bold.}}
\end{table}

% \subsubsection{Additional remarks?}

% \begin{itemize}
%     \item in recoverable robustness, there may exist several optimal solutions in terms of quality objective but with different number of differences;
%     \item in proactive with $\varepsilon > 0$, their may be several optimal solution in terms of solution cost but with different quality objectives. To improve this, can be solved as bi-objective problem with a preference over the objectives. Similarly in the recoverable robustness approach the number of differences can constitutes an additional objective.
% \end{itemize}

\section*{Conclusion}

{We introduced a new robust approach which optimizes the solution robustness by minimizing the solution cost over a discrete set of scenarios while ensuring the optimality of the nominal objective. We proved that the proactive counterparts of two polynomial network flow problems are NP-hard and that their reactive counterparts are polynomial for $d_{val}$ and NP-hard for $d_{struct}$.}

{We show in a case study of a railroad planning problem that a proactive solution can significantly reduce the solution cost compared to other solutions with an optimal nominal objective. Relaxing the optimality constraint on the nominal objective can also  further reduce the solution cost. Unlike the $k$-distance approach, the proactive approach does not require the definition of a parameter $k$ which may prove difficult to fix. We also observed that the anchored approach tends to increase more significantly the solution cost that the proactive approach decreases the number of anchored lines.}

{In future works it would be interesting to study the complexity of other problems or distances in this framework all the more so if it enables the identification of polynomial proactive integer problems. The discrete set of scenarios $\mathcal S$ could also be replaced by classical sets such as box,  budgeted or polytope uncertainty sets. This may lead to more challenging problems as it would no longer be possible to define a compact formulation which associates a set of variables to each possible scenario.  Finally, rather than imposing a bound on the nominal objective,  the proactive problem could be solved as a bi-objective problem in which both the solution cost and the nominal objective are minimized.}

\section*{Acknowledgement}

The beginnings of this work originates from Remi Lucas's PhD thesis~\cite{lucas2020planification}. The authors thank Fran\c cois Ramond, R\'emi Chevrier and R\'emi Lucas for the fruitful  collaboration during this thesis.

\clearpage
\section*{Appendix}
\appendix
\section{Nominal solutions generation}
\label{sec:div_sol}

{In Section~\ref{sec:pro_reac} we consider four solutions from $Q^*$, the set of solutions which have an optimal nominal objective $c^*$. We show that their solution cost vary significantly. We detail in this section how these solutions are obtained.}

{ In order to be as representative as possible of $Q^*$, we iteratively generate solutions which have a high solution cost with one another. More formally, if the solution cost is defined by a distance $d$ and if $Q\subset Q^*$ is the set of solutions obtained in previous iterations, we find a solution $x$ which distance is farthest from its nearest solution in $Q$: $x\in argmax_{x\in Q^*}\min_{x^q\in Q} d(x, x^q)$.}

{This problem can be solve through a MILP for the line optimization problem when considering any of the two distances presented in Section~\ref{sec:det_ol}. Since both models are similar we only present that for distance $d_{struct}$ over the lines deployment variables $x$.  In this model, variable $z$ represents the objective value while variables $x$ and $f$ correspond to the new solution. Finally, for each solution $x^q\in Q$ and each line $\ell\in L$, binary variable $dx^q_\ell$ is equal to $1$ if and only if $x_\ell$ is equal to $x^q_\ell$.}

\begin{center}$P_x(Q, c^*)\left\{
\begin{array}{lll@{\qquad}r}
 \max~ & z&\nonumber\\
     \mbox{s.t.~}& z\leq\sum_{\ell\in L} dx^q_\ell & x^q\in Q&\zcounter{eq:diff-z}\\
    ~& (x, f)\in \mathcal F(OD^0)&& \zcounter{eq:diff-xf}\\
~&       \mathcal NO(x, f)= c^*&&\zcounter{eq:diff-cstar}\\
 ~&     x^q_\ell - x_\ell \leq dx^q_\ell & x^q\in Q,~\ell\in L&\zcounter{eq:diff-xq1}\\
~    &     -x^q_\ell + x_\ell \leq dx^q_\ell & x^q\in Q,~\ell\in L&\zcounter{eq:diff-xq2}\\
\end{array}
\right.$
\end{center}

{Constraints~\zcount{eq:diff-z} ensures that the objective is lower than or equal to the solution cost between $x$ and any $x^q\in Q$. Constraints~\zcount{eq:diff-xf} and~\zcount{eq:diff-cstar} guarantee that $(x, f)\in Q^*$. Finally, the link between variables $x$ and $dx^q$ is ensured through constraints~\zcount{eq:diff-xq1} and~\zcount{eq:diff-xq2}.}

\bibliographystyle{plain}
\bibliography{bibliography}

\end{document}